\theoremstyle{plain}
\newtheorem{theorem}{Theorem}[section]
\newtheorem{lemma}[theorem]{Lemma}
\newtheorem{proposition}[theorem]{Proposition}
\newtheorem{problem}[theorem]{Problem}
\newtheorem{corollary}[theorem]{Corollary}
\theoremstyle{definition}
\newtheorem{definition}[theorem]{Definition}
\newtheorem{example}[theorem]{Example}
\theoremstyle{remark}
\newtheorem{remark}[theorem]{Remark}
\newcommand{\bbbn}{\mathbb{N}}
\begin{document}
\title{Strongly Polynomial Sequences as Interpretations}
\author{A.J. Goodall \thanks{Supported by grant ERCCZ LL-1201 of the Czech Ministry of Education}\\
\small Computer Science Institute of Charles University (IUUK and ITI)\\
 \small    Malostransk\' e n\' am.25, 11800 Praha 1, Czech Republic\\
 \small    {\tt andrew@iuuk.mff.cuni.cz}
 \\
\and J. Ne\v set\v ril \thanks{Supported by grant ERCCZ LL-1201 of the Czech Ministry of Education, CE-ITI P202/12/G061 of GA{\v C}R, and LIA STRUCO}\\
\small Computer Science Institute of Charles University (IUUK and ITI)\\
 \small    Malostransk\' e n\' am.25, 11800 Praha 1, Czech Republic\\
 \small    {\tt nesetril@iuuk.mff.cuni.cz}
 \\
\and P. Ossona de Mendez \thanks{Supported by grant ERCCZ LL-1201 of the Czech Ministry of Education and LIA STRUCO, and partially supported by ANR project Stint under reference ANR-13-BS02-0007}\\
\small  Centre d'Analyse et de Math\'ematiques Sociales (CNRS, UMR 8557)\\
 \small   190-198 avenue de France, 75013 Paris, France\\
\small {\tt pom@ehess.fr}
}
\date{\today}
\maketitle

\begin{abstract}
A strongly polynomial sequence of graphs $(G_n)$  is a sequence
$(G_n)_{n\in\bbbn}$ of finite graphs such that, for 
every graph $F$, the number of homomorphisms from $F$ to $G_n$ is a fixed polynomial function of $n$ (depending on $F$).
For example, $(K_n)$ is strongly polynomial since the number of homomorphisms from $F$ to $K_n$ is the chromatic polynomial of $F$ evaluated at $n$. In earlier work of de la Harpe and Jaeger, and more recently of Averbouch, Garijo, Godlin, Goodall, Makowsky, Ne\v{s}et\v{r}il, Tittmann, Zilber and others, various examples of strongly polynomial sequences and constructions for families of such sequences have been found. 

We give a new model-theoretic  method of constructing strongly polynomial sequences of graphs that uses interpretation schemes of graphs in more general relational structures. This surprisingly easy yet general method encompasses all previous constructions and produces many more. We conjecture that, under mild assumptions, all strongly polynomial sequences of graphs can be produced by the general method of quantifier-free interpretation of graphs in certain basic relational structures (essentially disjoint unions of transitive tournaments with added unary relations). We verify this conjecture for strongly polynomial sequences of graphs with uniformly bounded degree.
\end{abstract}
{\bf Keywords}: graph homomorphism, graph polynomial, relational structure, interpretation scheme
\\
\noindent
{\bf Subject classification (MSC 2010)} Primary: 05C31, 05C60  
Secondary: 03C13, 03C98    	

\section{Introduction}\label{sec:intro}
The chromatic polynomial $P(G,x)$ of a graph $G$, introduced by
Birkhoff over a century ago, is such that for a positive integer $n$ the value $P(G,n)$ is equal 
to the number of proper $n$-colorings of the graph $G$.
Equivalently, $P(G,n)$ is the number ${\rm hom}(G,K_n)$ of homomorphisms
from $G$ to the complete graph $K_n$. It can thus be considered that the
sequence $(K_n)_{n\in\bbbn}$ defines the chromatic polynomial by means of homomorphism counting.

A {\em strongly polynomial sequence} of graphs is a sequence
$(G_n)_{n\in\bbbn}$ of finite graphs such that, for 
every graph $F$, the number of homomorphisms from $F$ to $G_n$ is a polynomial function of $n$ (the polynomial depending on $F$ and the sequence $(G_n)$, but not on $n$). A sequence
$(G_n)_{n\in\bbbn}$ of finite graphs is {\em polynomial} if this condition holds for sufficiently large $n\geq n_F$.
The sequence of complete graphs $(K_n)$ provides a classical example of a strongly polynomial sequence. 
A homomorphism from a graph $F$ to a graph $G$ is often called a {\em $G$-colouring of $F$}, the vertices of $G$ being the ``colours" assigned to vertices of $F$  and the edges of $G$ specifying the allowed colour combinations on the endpoints of an edge of $F$.
 
The notion of (strongly) polynomial sequences of graphs was introduced by de la Harpe and Jaeger~\cite{dlHJ95} (as a generalization of the chromatic polynomial), in a paper which includes a characterization of polynomial sequences of graphs via (induced) subgraph counting and the construction of polynomial sequences by graph composition. The notion of a (strongly) polynomial sequence extends naturally to relational structures, thus allowing the use of standard yet powerful tools from model theory, like interpretations.

The ``generalized colourings" introduced in~\cite{MZ06} include only colourings invariant under all permutations of colours, which holds for $K_n$-colourings (that is, proper $n$-colourings), but not in general for $G_n$-colourings for other sequences of graphs $(G_n)$. 
However, generalized colourings in the sense defined in~\cite{MZ06} do include harmonious colourings (proper colourings with the further restriction that a given pair of colours appears only once on an edge) and others not expressible as the number of homomorphisms to terms of a graph sequence. Makowsky~\cite{M08} moves towards a classification of polynomial graph invariants, but one that does not include the class of invariants we define in this paper.

Garijo, Goodall and Ne\v{s}et\v{r}il~\cite{GGN} focus on constructing strongly polynomial sequences. 
They give a construction involving coloured rooted trees that produces a large class of strongly polynomial sequences, that in particular incorporates the Tittmann--Averbouch--Makowsky polynomial~\cite{TAM09} (not obtainable by graph composition and other previously known operations for building new polynomial sequences from old).   

We extend the scope of the term ``strongly polynomial" to sequences of general relational structures.
The property of a sequence of relational structures being strongly polynomial is preserved under a rich variety of transformations afforded by the model-theoretic notion of an interpretation scheme. We start with ``trivially" strongly polynomial sequences of relational structures, made from basic building blocks, and then by interpretation project these sequences onto graph sequences that are also strongly polynomial. 
The interpretation schemes that can be used here are wide-ranging (they need only be quantifier-free in their specification), and therein lies the power of the method. All constructions of strongly polynomial sequences that have been devised in~\cite{dlHJ95} and~\cite{GGN} are particular cases of such interpretation schemes for graph structures. 
Indeed, we conclude the paper with the conjecture that (under mild assumptions) {\em all} strongly polynomial sequences of graphs might be produced by the schema we describe here. This is verified for the case of sequences of graphs with uniformly bounded degree. 

\section{Preliminaries}
\subsection{Relational structures}
A {\em relational structure} $\mathbf A$ with {\em signature} $\lambda$ is defined by its {\em domain} $A$, a set whose elements we shall call vertices, and relations with names and arities as defined in $\lambda$. A relational structure will be denoted by an uppercase letter in boldface and its underlying domain by the corresponding lightface letter; for brevity we refer to a relational structure $\mathbf A$ with signature $\lambda$ as a {\em $\lambda$-structure}, and may just give the {\em type} (list of arities given by $\lambda$) when the symbols used for the corresponding relations are not of importance. 
A $1$-ary relation defines a subset of the domain and will be called a {\em label}, or a {\em mark} (a special type of labelling defined at the end of this section). A $2$-ary relation defines edges of a digraph on vertex set the domain, and a graph when the relation is symmetric. When the signature $\lambda$ contains only arities $1$ and $2$ we have a digraph together with labels on edges and vertices: relations of arity $1$ are labels on vertices (where in general a vertex may receive more than one label) and relations of arity $2$ are labelled edges (two vertices may be joined by edges of different labels). A Cayley graph on a group $\Gamma$ with finite generating set $S\subset\Gamma$ is an example of a labelled digraph, a directed edge joining $x$ to $y$ bearing label $s\in S$ such that $y=xs$. 

The symbols of the relations and constants defined in $\lambda$ define the non-logical symbols of the first-order language ${\rm FO}(\lambda)$ associated with $\lambda$-structures. We take first-order logic with equality as a primitive logical symbol and which is always interpreted as standard equality, so the equality relation does not appear in the signature $\lambda$. In what follows $\lambda$ will be finite, 
 in which case ${\rm FO}(\lambda)$ is countable. 
The variable symbols will be taken from the set $\{x_i : i\in \mathbb N\}$ or $\{y_i : i\in \mathbb N\}$, or, when double indexing is convenient, from $\{x_{i,j} : i,j\in \mathbb N\}$.  The subset of ${\rm FO}(\lambda)$ consisting of formulas with exactly $p$ free variables is denoted by ${\rm FO}_p(\lambda)$. 
The fragment of FO$(\lambda)$ consisting of quantifier-free formulas is denoted by QF$(\lambda)$, and ${\rm QF}_p(\lambda)$ denotes those quantifier-free formulas with exactly $p$ free variables.

For a formula $\phi\in{\rm FO}_p(\lambda)$ and a $\lambda$-structure $\mathbf A$ we define the {\em satisfaction set}
$$
\phi(\mathbf A)=\{(v_1,\dots,v_p)\in A^p:\ \mathbf A\models\phi(v_1,\dots,v_p)\},
$$
where $\phi(v_1,\dots, v_p)$ is the formula obtained upon substituting $v_i$ for each free variable $x_i$ of $\phi$, $i=1,\dots, p$. 
A {\em homomorphism} from a $\lambda$-structure $\mathbf A$ to a $\lambda$-structure $\mathbf B$ is a mapping $f:A\rightarrow B$ that preserves relations, that is, which has the property that for each relation $R$ in $\lambda$ of any given arity $r$ it is the case that $R(f(v_1),\dots, f(v_r))$ in $\mathbf B$ whenever $R(v_1,\dots, v_r)$ in $\mathbf A$. 
When $\mathbf A$ is a graph and $R$ the relation representing adjacency of vertices this is a graph homomorphism as usually defined.

The number of homomorphisms from $\mathbf A$ to $\mathbf B$ is denoted by ${\rm hom}(\mathbf A, \mathbf B)$.

A $\kappa$-structure $\mathbf A$ and a $\lambda$-structure $\mathbf B$ are {\em weakly isomorphic}, denoted by $\mathbf A\approx\mathbf{B}$, if there exists a bijection $t$ between the symbols in $\kappa$ and the symbols in $\lambda$ and a bijection $f:A\rightarrow B$ such that, for every $R\in\kappa$,
the relations $R$ and $t(R)$ have the same arity
(here denoted by $r$) and for every $v_1,\dots,v_{r}\in A$ we have
$$
\mathbf{A}\models R(v_1,\dots,v_{r})\quad\iff\quad
\mathbf{B}\models t(R)(f(v_1),\dots,f(v_{r})).
$$
In other words, $\mathbf A\approx\mathbf{B}$ if $\mathbf{A}$ and $\mathbf{B}$ are the same structure, up to renaming of the relations and relabelling of the vertices.

For signatures $\kappa$ and $\lambda$, we denote by $\kappa\sqcup\lambda$ the signature
obtained from the disjoint union of $\kappa$ and $\lambda$, 
The {\em strong sum} $\mathbf A\oplus\mathbf B$ of a
 $\kappa$-structure $\mathbf A$ and a $\lambda$-structure $\mathbf B$ is the $\kappa\oplus\lambda$-structure
whose domain is the disjoint union $A\sqcup B$ of the domains of $\mathbf{A}$ and $\mathbf{B}$, where
for every $R\in\kappa$ and $S\in\lambda$ (with respective arities $r$ and $s$) and for every $v_1,\dots,v_{\max(r,s)}$ in $A\sqcup B$ it holds that
\begin{align*}
\mathbf A\oplus\mathbf B\models R(v_1,\dots,v_r)\quad&\iff\quad (v_1,\dots,v_r)\in A^r\text{ and }\mathbf{A}\models R(v_1,\dots, v_r),\\
\mathbf A\oplus\mathbf B\models S(v_1,\dots,v_s)\quad&\iff\quad (v_1,\dots,v_s)\in B^s\text{ and }\mathbf{S}\models S(v_1,\dots, v_s).
\end{align*}
Note that the strong sum is not commutative, but we do have
$$\mathbf{A}\oplus\mathbf{B}\approx\mathbf{B}\oplus\mathbf{A}.$$

A class $\mathcal C$ of $\lambda$-structures is {\em marked} by a relation $U\in\lambda$ if $U$ is unary, and for every $\mathbf A\in\mathcal C$ we have
$$\mathbf A\models (\forall x)\ U(x).$$

\subsection{Sequences of relational structures}

We begin with a definition of the notion that is the subject of this paper. 
\begin{definition}\label{def:polyseq}
A sequence $(\mathbf A_n)_{n\in\bbbn}$ of $\lambda$-structures is 
	{\em strongly polynomial} if for every quantifier-free formula $\phi$ there
	is a polynomial $P_\phi$ such that $|\phi(\mathbf A_n)|=P_\phi(n)$ holds for every $n\in\bbbn$.
\end{definition}
\begin{remark} A sequence is {\em polynomial} if for every quantifier-free formula $\phi$ there
	is a polynomial $P_\phi$ and an integer $n_\phi$ such that 
	$|\phi(\mathbf A_n)|=P_\phi(n)$ holds for every integer $n\geq n_\phi$. We shall only consider strongly polynomial  sequences in this paper, but occasionally it will help to clarify what is involved in the property of being a strongly polynomial sequence by giving examples of sequences that are polynomial but not strongly polynomial. 
	\end{remark}

We begin by formulating equivalent criteria for a sequence of structures to be strongly polynomial in terms of homomorphisms or induced substructures (Theorem~\ref{thm:equiv} below). For graph structures this will make a direct connection to the notion as originally defined by de la Harpe and Jaeger~\cite{dlHJ95}.

For this we require the following lemma: 

\begin{lemma}
\label{lem:QFhom}
Let $\lambda$ be a signature for relational structures. For every formula $\phi$ in ${\rm QF}(\lambda)$, 
there exist  $\lambda$-structures $\mathbf F_1,\dots,\mathbf F_\ell$ and integers
$c_1,\dots,c_\ell$ such that for every  $\lambda$-structure $\mathbf A$ we have
$$|\phi(\mathbf A)|=\sum_i c_i\,{\rm hom}(\mathbf F_i,\mathbf A).$$
\end{lemma}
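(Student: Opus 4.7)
The plan is to reduce a general quantifier-free formula to conjunctions of \emph{positive} atomic formulas, and then to observe that any such conjunction, when counted over $A^p$, is precisely a homomorphism count from an associated $\lambda$-structure into $\mathbf A$.

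For the first step I would note that a quantifier-free formula $\phi(x_1,\dots,x_p)$ depends only on the truth values of finitely many atoms $\beta_1,\dots,\beta_m$, each of the form $x_a=x_b$ or $R(x_{a_1},\dots,x_{a_r})$. Since $\mathbf 1_{\beta_i}\in\{0,1\}$, substituting $\mathbf 1_{\neg\beta_i}=1-\mathbf 1_{\beta_i}$ and expanding (equivalently, applying inclusion--exclusion to any DNF of $\phi$) produces an identity
$$
\mathbf 1_\phi \;=\; \sum_{T\subseteq[m]} a_T\prod_{i\in T}\mathbf 1_{\beta_i},\qquad a_T\in\mathbb Z,
$$
valid as an equality of functions on $A^p$ for every $\lambda$-structure $\mathbf A$. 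Summing over $A^p$ gives $|\phi(\mathbf A)|=\sum_T a_T\,|\psi_T(\mathbf A)|$, where $\psi_T:=\bigwedge_{i\in T}\beta_i$ is a conjunction of positive atoms (with $\psi_\emptyset\equiv\top$).

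The heart of the argument is the second step. Given such a $\psi_T$, let $\sim$ be the equivalence relation on $[p]$ generated by the equality atoms of $\psi_T$, and define $\mathbf F_T$ to be the $\lambda$-structure with domain $[p]/{\sim}$ in which $([x_{a_1}],\dots,[x_{a_r}])\in R^{\mathbf F_T}$ precisely when $R(x_{a_1},\dots,x_{a_r})$ occurs as a conjunct of $\psi_T$. For any $\lambda$-structure $\mathbf A$, the rule $f([x_i]):=v_i$ gives a bijection between the tuples $(v_1,\dots,v_p)\in\psi_T(\mathbf A)$ and the homomorphisms $\mathbf F_T\to\mathbf A$: well-definedness of $f$ on $\sim$-classes matches the equality conjuncts of $\psi_T$, while preservation of the relations of $\mathbf F_T$ matches its relational conjuncts. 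Hence $|\psi_T(\mathbf A)|={\rm hom}(\mathbf F_T,\mathbf A)$, and combining with Step 1 gives the lemma with $\mathbf F_i:=\mathbf F_{T_i}$ and $c_i:=a_{T_i}$.

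There is no substantive obstacle here: everything is purely syntactic, and the lists $(\mathbf F_i)$ and $(c_i)$ depend only on $\phi$, not on $\mathbf A$. The only points requiring a moment's care are the degenerate cases, most importantly $T=\emptyset$, where $\mathbf F_\emptyset$ is the empty-relation structure on $p$ vertices and indeed ${\rm hom}(\mathbf F_\emptyset,\mathbf A)=|A|^p$, as required.
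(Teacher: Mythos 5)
Your proof is correct, and it takes a genuinely different route from the paper. The paper's proof puts $\phi$ in a partition-indexed disjunctive normal form to obtain $|\phi(\mathbf A)|=\sum_{\mathcal P}{\rm ind}(\mathbf F_{\mathcal P},\mathbf A)$, and then performs two successive inversions: one (inclusion--exclusion over added relations) to pass from ${\rm ind}$ to ${\rm inj}$, and a second (M\"obius inversion over the partition lattice) to pass from ${\rm inj}$ to ${\rm hom}$. You instead expand the indicator $\mathbf 1_\phi$ in the multilinear monomial basis over the atoms, which does one inclusion--exclusion at the level of Boolean functions and reduces everything to \emph{positive} conjunctions of atoms; you then observe that the satisfaction count of a positive conjunction is exactly a homomorphism count from its canonical structure $\mathbf F_T$. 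This last step is precisely the Chandra--Merlin correspondence between conjunctive queries and homomorphisms, and your verification of the bijection (well-definedness on $\sim$-classes from the equality conjuncts, preservation of relations from the relational conjuncts, and that $[p]/{\sim}$ is exhausted by the $[x_i]$) is complete; since $\psi_T$ is negation-free it is always consistent, so $\mathbf F_T$ is well defined, including in the degenerate case $T=\emptyset$. What you gain is a shorter and more elementary argument that bypasses ${\rm ind}$ and ${\rm inj}$ entirely. What the paper's route buys in return is the intermediate identity $|\phi(\mathbf A)|=\sum_{\mathcal P}{\rm ind}(\mathbf F_{\mathcal P},\mathbf A)$, which is reused in the proof of Theorem~\ref{thm:equiv} to establish that condition (iv) (polynomiality of induced-substructure counts) implies condition (ii); if one adopts your proof instead, one would need to supply that equivalence separately (though it is easy, e.g.\ by the same two-step inversion in the other direction).
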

\begin{proof}
Let $\phi\in {\rm QF}(\lambda)$ be quantifier-free with free variables $x_1,\dots, x_p$. 

We first put $\phi$ in disjunctive normal form, with basic terms $(x_i=x_j)$ (for $1\leq i<j\leq p$) and 
$R(x_{i_1},\dots,x_{i_r})$ for relation $R$ in $\lambda$ with arity $r$ and $i_1,\dots,i_r$ in $\{1,\dots,p\}$.
Thus $\phi$ is logically equivalent to 
\begin{equation}
\label{eq:disj1}
\bigvee_\mathcal{P} \zeta_\mathcal{P}\wedge\phi_\mathcal{P},
\end{equation}
where the disjunction runs over partitions $\mathcal P$ of $\{1,\dots,p\}$, where $\zeta_\mathcal{P}$ is the conjunction of all equalities and non-equalities that have to hold between free variables $x_1,\dots,x_p$ in order that $\mathcal P$ induces the partition of free variables into their $k$ ($1\leq k\leq p$) equality classes, and where $\phi_\mathcal{P}$ is a formula with $k$ free variables defining the
$\lambda$-structure $\mathbf F_\mathcal{P}$ induced by $x_{i_1},\dots,x_{i_k}$ for arbitrary choice of representatives $i_1,\dots,i_k$ of the parts of $\mathcal{P}$. 
As all the terms in~\eqref{eq:disj1} are mutually exclusive, we have
$$|\phi(\mathbf A)|=\sum_\mathcal{P}{\rm ind}(\mathbf F_\mathcal{P},\mathbf A),$$
where ${\rm ind}(\mathbf F,\mathbf{A})$ denotes the number of injective mappings $f:F\rightarrow A$ defining an isomorphism between $\mathbf F$ and its image.

 We wish to rewrite this sum in terms of induced substructures as one in terms of homomorphism numbers, and we achieve this in two steps. First we move from counting induced substructures to counting injective homomorphisms, 
$${\rm inj}(\mathbf F,\mathbf A)=\sum_{\stackrel{\mathbf F': F'=F}{\forall R\in\lambda\; R(\mathbf F')\supseteq R(\mathbf F)}}{\rm ind}(\mathbf F',\mathbf A),$$ 
in which ${\rm inj}(\mathbf F, \mathbf A)$ denotes the number of injective homomorphisms from $\mathbf F$ into $\mathbf A$ and $R(\mathbf F)=\{(v_1,\dots, v_r)\in F^r:R(v_1,\dots, v_r)\}$ is the set of tuples satisfying the $r$-ary relation $R$ in $\mathbf F$, and similarly $R(\mathbf F')$ denotes those tuples satisfying the relation $R$ in $\mathbf F'$. From this identity, by inclusion-exclusion we obtain 
$${\rm ind}(\mathbf F,\mathbf A)=\sum_{\stackrel{\mathbf F':F'=F}{\forall R\in\lambda\; R(\mathbf F')\subseteq R(\mathbf F)}}\left(\prod_{R\in\lambda}(-1)^{|R(\mathbf F)|\!-\!|R(\mathbf F')|}\right){\rm inj}(\mathbf F',\mathbf A).$$
The second step is to move from counting injective homomorphisms to counting all homomorphisms, the relationship between which is given by $${\rm hom}(\mathbf F, \mathbf A)=\sum_{\Theta}{\rm inj}(\mathbf{F}/\Theta,\mathbf A),$$
where the sum is over partitions $\Theta$ of the domain $F$ of $\mathbf F$ and $\mathbf F/\Theta$ is the structure obtained from $\mathbf F$ by identifying elements of its domain $F$ that lie in the same block of $\Theta$. We then obtain
$${\rm inj}(\mathbf F,\mathbf A)=\sum_{\Theta}\mu(\Theta){\rm hom}(\mathbf F/\Theta,\mathbf A),$$
where $$\mu(\Theta)=\prod_{I\in\Theta}(-1)^{|I|\!-\!1}(|I|\!-\!1)!$$ is the M\"obius function of the lattice of partitions of $F$. The statement of the lemma now follows.
\end{proof}
\begin{remark} In the context of graphs (structures with signature comprising a symmetric binary relation) the identities used in the proof of Lemma~\ref{lem:QFhom} between counts of induced subgraphs, homomorphisms and injective homomorphisms find widespread application (see for example~\cite{BCLSV06}).
\end{remark}

We now come to the promised reformulation of the notion of a strongly polynomial sequence of structures.
\begin{theorem}
\label{thm:equiv}
The following are equivalent for a sequence $(\mathbf A_n)_{n\in\bbbn}$ of $\lambda$-structures:
\begin{itemize}
\item[(i)] The sequence $(\mathbf A_n)_{n\in\bbbn}$ is strongly polynomial;
\item[(ii)] For each quantifier-free formula $\phi$ there is a polynomial $P_\phi$ such that $|\phi(\mathbf A_n)|=P_\phi(n)$ for each $n\in\mathbb{N}$;
\item[(iii)] For each $\lambda$-structure $\mathbf F$ there is a polynomial $P_{\mathbf F}$ such that ${\rm hom}(\mathbf F,\mathbf A_n)=P_{\mathbf F}(n)$ for each $n\in\mathbb{N}$;
\item[(iv)] For each $\lambda$-structure $\mathbf G$ there is a polynomial $P_{\mathbf G}$ such that ${\rm ind}(\mathbf G,\mathbf A_n)=P_{\mathbf G}(n)$ for each $n\in\mathbb{N}$.
\end{itemize}
\end{theorem}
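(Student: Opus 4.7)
The plan is to prove the cycle $(ii)\Rightarrow(iii)\Rightarrow(ii)$ and $(ii)\Rightarrow(iv)\Rightarrow(ii)$, since $(i)\Leftrightarrow(ii)$ is simply Definition~\ref{def:polyseq}. All four implications rest on the same observation: the quantities $|\phi(\mathbf A)|$, $\mathrm{hom}(\mathbf F,\mathbf A)$ and $\mathrm{ind}(\mathbf G,\mathbf A)$ are each finite $\mathbb Z$-linear combinations of one another (independently of $\mathbf A$), and finite $\mathbb Z$-linear combinations of polynomials in $n$ are polynomials in $n$.

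For $(ii)\Rightarrow(iii)$, given a $\lambda$-structure $\mathbf F$ with domain $\{v_1,\dots,v_p\}$, I would write down the quantifier-free formula
$$\phi_{\mathbf F}(x_1,\dots,x_p)\;=\;\bigwedge_{R\in\lambda}\;\bigwedge_{(v_{i_1},\dots,v_{i_r})\in R(\mathbf F)}R(x_{i_1},\dots,x_{i_r}),$$
whose satisfying tuples in $A^p$ are exactly the homomorphisms $v_i\mapsto x_i$ from $\mathbf F$ to $\mathbf A$; hence $|\phi_{\mathbf F}(\mathbf A_n)|=\mathrm{hom}(\mathbf F,\mathbf A_n)$, and $(ii)$ yields the polynomial $P_{\mathbf F}$. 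The reverse direction $(iii)\Rightarrow(ii)$ is immediate from Lemma~\ref{lem:QFhom}: every $|\phi(\mathbf A_n)|$ is a fixed integer combination $\sum_i c_i\,\mathrm{hom}(\mathbf F_i,\mathbf A_n)$ of polynomials in $n$.

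For $(ii)\Leftrightarrow(iv)$, the forward direction is visible already inside the proof of Lemma~\ref{lem:QFhom}: it is shown there that any quantifier-free $\phi$ satisfies $|\phi(\mathbf A)|=\sum_{\mathcal P}\mathrm{ind}(\mathbf F_{\mathcal P},\mathbf A)$, so if each $\mathrm{ind}(\mathbf G,\mathbf A_n)$ is polynomial in $n$ then so is each $|\phi(\mathbf A_n)|$. Conversely, for a fixed $\mathbf G$ on $q$ vertices $v_1,\dots,v_q$, the formula
$$\psi_{\mathbf G}(x_1,\dots,x_q)\;=\;\bigwedge_{1\le i<j\le q}\neg(x_i=x_j)\;\wedge\;\bigwedge_{R\in\lambda}\Bigl(\bigwedge_{(v_{i_1},\dots,v_{i_r})\in R(\mathbf G)}R(x_{i_1},\dots,x_{i_r})\;\wedge\;\bigwedge_{(v_{i_1},\dots,v_{i_r})\notin R(\mathbf G)}\neg R(x_{i_1},\dots,x_{i_r})\Bigr)$$
is quantifier-free and satisfies $|\psi_{\mathbf G}(\mathbf A_n)|=\mathrm{ind}(\mathbf G,\mathbf A_n)$, so $(ii)$ gives $(iv)$.

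There is no real obstacle here; the content of the theorem is entirely in Lemma~\ref{lem:QFhom}, and what remains is the bookkeeping above together with the closure of the set of polynomial sequences under finite $\mathbb Z$-linear combinations. The one point to take care of is that these equivalences use \emph{strongly} polynomial — i.e.\ agreement for all $n\in\mathbb N$ rather than only for $n$ large — which is preserved under finite integer linear combinations but would require a uniform bound on $n_{\mathbf F}$ to be preserved in the merely polynomial setting.
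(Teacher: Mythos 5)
Your proof follows the same route as the paper: $(i)\Leftrightarrow(ii)$ by definition, $(ii)\Rightarrow(iii)$ and $(ii)\Rightarrow(iv)$ because ${\rm hom}$ and ${\rm ind}$ counts are satisfaction-set cardinalities of quantifier-free formulas (which you helpfully spell out explicitly), $(iii)\Rightarrow(ii)$ via Lemma~\ref{lem:QFhom}, and $(iv)\Rightarrow(ii)$ via the intermediate identity $|\phi(\mathbf A)|=\sum_{\mathcal P}{\rm ind}(\mathbf F_{\mathcal P},\mathbf A)$ established inside the proof of that lemma. The argument is correct and matches the paper's, differing only in that you make the ``expressible in QF'' step explicit.
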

\begin{proof} Items (i) and (ii) are equivalent by definition. As homomorphisms and finite induced substructures can be expressed in QF, items (iii) and (iv) are both special cases of (ii). Finally, Lemma~\ref{lem:QFhom} shows that (iii) implies (ii), and the proof of the same lemma that (iv) also implies (ii).  
\end{proof}

\begin{remark}
\label{rem:rat}
In Theorem~\ref{thm:equiv}, the equivalence also holds with weaker conditions in which the existence of a polynomial function is replaced by the existence of a rational function. Indeed, assume $f(x)=P(x)/Q(x)$ is a rational function, where $P$ and $Q$ are  polynomials.  Then there exist polynomials $R(x), S(x)$ such that $f(x)=S(x)+R(x)/Q(x)$ and $\deg R<\deg Q$. For sufficiently large $n$, it follows that $-1<R(x)/Q(x)<1$. As $f(x)$ takes only integral values on integers, it follows that $R(n)/Q(n)=0$ for sufficiently large $n$. Hence $R=0$ and $f$ is a polynomial function.
\end{remark}
\begin{remark}
\label{rem:extsig}
Assume $\kappa$ is a signature that is a subset of another signature $\lambda$. Then every $\kappa$-structure can be considered as a $\lambda$-structure. The notion of strongly polynomial sequence is robust in the sense that
a sequence $(\mathbf A_n)_{n\in\bbbn}$ of $\kappa$-structures is strongly polynomial if and only if $(\mathbf A_n)_{n\in\bbbn}$ (considered as a sequence of $\lambda$-structures) is strongly polynomial: indeed, for every $\lambda$-structure $\mathbf F$, either $\mathbf F$ contains a relation not in $\kappa$ and thus
${\rm hom}(\mathbf F,\mathbf A_n)=0$ for every $n\in\bbbn$, or $\mathbf F$ can be considered as a $\kappa$-structure, and the number of homomorphisms from $\mathbf F$ to $\mathbf A_n$ does not depend on the signature considered.
\end{remark}

We end this section with a few statements on the invariance of strongly polynomial sequences with respect to various operations.

\begin{lemma}
\label{lem:union}
Let $(\mathbf A_{i,n})_{n\in\bbbn}$  be strongly polynomial sequences of $\lambda_i$-structures, for $1\leq i\leq k$.
Then the sequence $(\bigoplus_{i=1}^k \mathbf A_{i,n})_{n\in\bbbn}$ is strongly polynomial.
\end{lemma}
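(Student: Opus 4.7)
The plan is to induct on $k$ and use the equivalence of strong polynomiality with polynomial homomorphism counts, that is, condition (iii) of Theorem~\ref{thm:equiv}. It suffices to handle $k=2$; the general case follows by iterating. So let $\mathbf A_n = \mathbf A_{1,n}\oplus\mathbf A_{2,n}$ and fix a $(\lambda_1\sqcup\lambda_2)$-structure $\mathbf F$; I want to show ${\rm hom}(\mathbf F,\mathbf A_n)$ is a polynomial in $n$.

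The key structural observation comes from the definition of strong sum: a relation $R\in\lambda_1$ can only be satisfied in $\mathbf A_n$ by tuples lying entirely in $A_{1,n}$, and symmetrically for $\lambda_2$. Consequently, given a homomorphism $f:\mathbf F\to\mathbf A_n$, the preimages $F_1 = f^{-1}(A_{1,n})$ and $F_2 = f^{-1}(A_{2,n})$ form a partition of $F$ such that every $\lambda_1$-relation of $\mathbf F$ has all its arguments in $F_1$ and every $\lambda_2$-relation has all its arguments in $F_2$. Call such an ordered partition \emph{admissible}. (Vertices of $F$ that participate in no relation at all are free to be assigned to either part.)

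Conversely, for each admissible partition $(F_1,F_2)$, let $\mathbf F_1$ be the $\lambda_1$-structure on $F_1$ inherited from $\mathbf F$ (the $\lambda_2$-relations being vacuous on $F_1$ by admissibility), and define $\mathbf F_2$ symmetrically. Since the relations of $\lambda_1$ and $\lambda_2$ act on disjoint parts of $A_n$, a homomorphism $f:\mathbf F\to\mathbf A_n$ with $f^{-1}(A_{i,n})=F_i$ is precisely a pair $(f_1,f_2)$ of homomorphisms $f_i:\mathbf F_i\to\mathbf A_{i,n}$. Summing over the finitely many admissible partitions therefore yields
\begin{equation*}
{\rm hom}(\mathbf F,\mathbf A_n) \;=\; \sum_{(F_1,F_2)\text{ admissible}} {\rm hom}(\mathbf F_1,\mathbf A_{1,n})\cdot {\rm hom}(\mathbf F_2,\mathbf A_{2,n}).
\end{equation*}
By hypothesis each factor on the right is a polynomial in $n$ (applying Theorem~\ref{thm:equiv}(iii) to each sequence $(\mathbf A_{i,n})$), so the finite sum is a polynomial in $n$. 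Invoking Theorem~\ref{thm:equiv} again, this is all that is needed.

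There is no real obstacle here beyond carefully checking the admissibility condition and the bijective correspondence between homomorphisms $\mathbf F\to\mathbf A_n$ and pairs of restricted homomorphisms; the mild subtlety is the treatment of vertices of $\mathbf F$ appearing in no relation, which are counted correctly precisely because we sum over all admissible partitions rather than just one canonical one.
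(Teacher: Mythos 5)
Your proof is correct, and it takes a genuinely different (though related) route from the paper's. The paper decomposes $\mathbf F$ into its connected components $\mathbf F_1,\dots,\mathbf F_\ell$ and uses the two identities ${\rm hom}(\mathbf F,\mathbf X)=\prod_j{\rm hom}(\mathbf F_j,\mathbf X)$ and ${\rm hom}(\mathbf F_j,\bigoplus_i\mathbf A_{i,n})=\sum_i{\rm hom}(\mathbf F_j,\mathbf A_{i,n})$, the latter because a connected structure must map entirely into one summand (and vanishes on all summands if it uses relations from more than one $\lambda_i$). Your argument instead partitions the domain $F$ by the preimages of the two summands, which amounts to the same thing since an admissible partition is exactly a choice, for each connected component, of which side it goes to (forced for components carrying relations, free for isolated vertices). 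What your version buys is a more elementary and self-contained bijective count that does not require stating multiplicativity over connected components as a separate step; what it costs is the need to verify the bijection carefully (which you do) and to iterate from $k=2$ to general $k$ rather than treating all $k$ uniformly. The iteration is fine because strong sum is associative and the $k=2$ case already establishes that the partial sums are strongly polynomial. Both proofs correctly pass through Remark~\ref{rem:extsig} or its equivalent when applying the strong-polynomiality hypothesis to $\mathbf F_i$ viewed over the larger signature.
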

\begin{proof}
By Lemma~\ref{lem:QFhom} it is sufficient to check polynomiality of 
${\rm hom}(\mathbf F,\bigoplus_{i=1}^k \mathbf A_{i,n})$. Let $\mathbf F_1,\dots,\mathbf F_\ell$ be the connected components
of $\mathbf F$. Then the proof follows from the identity
\begin{align*}
{\rm hom}(\mathbf F,\bigoplus_{i=1}^k \mathbf A_{i,n})&=\prod_{j=1}^\ell {\rm hom}(\mathbf F_j,\bigoplus_{i=1}^k \mathbf A_{i,n})\\
&=\prod_{j=1}^\ell \sum_{i=1}^k {\rm hom}(\mathbf F_j,\mathbf A_{i,n}).
\end{align*}
(Note that in the last equality we consider each $\mathbf A_{i,n}$ as a $\bigsqcup_{i=1}^k\lambda_i$-structure, which it is safe to do according to Remark~\ref{rem:extsig}.) 
\end{proof}


\begin{lemma}
\label{lem:multP}
Let $(\mathbf A_n)_{n\in\bbbn}$ be a strongly polynomial sequence of $\lambda$-structures and let $P$ be a polynomial such that $P(n)\in\mathbb{N}$ for $n\in\mathbb{N}$.
Then the sequence $(P(n)\,\mathbf A_n)_{n\in\bbbn}$ is strongly polynomial, where $P(n)\,\mathbf A_n$ denotes the $\lambda$-structure obtained as the disjoint union of $P(n)$ copies of $\mathbf A_n$.
\end{lemma}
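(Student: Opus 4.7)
The plan is to invoke Theorem~\ref{thm:equiv} and reduce the claim to showing that $\mathrm{hom}(\mathbf F, P(n)\,\mathbf A_n)$ is a polynomial in $n$ for every $\lambda$-structure $\mathbf F$. This mirrors the strategy already used in Lemma~\ref{lem:union}, so I expect the argument to be short.

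First I would decompose $\mathbf F$ into its connected components $\mathbf F_1,\dots,\mathbf F_\ell$, where connectedness is taken with respect to the Gaifman graph of $\mathbf F$, just as in the proof of Lemma~\ref{lem:union}. The key observation is that any homomorphism from $\mathbf F$ into a disjoint-union structure must send each connected component entirely into one of the summands, because homomorphisms preserve every relational tuple and therefore Gaifman-adjacency. Applying this to the $P(n)$-fold disjoint union $P(n)\,\mathbf A_n$, each component $\mathbf F_j$ contributes a factor that picks one of the $P(n)$ copies and then maps into it, yielding
$$\mathrm{hom}(\mathbf F, P(n)\,\mathbf A_n)=\prod_{j=1}^{\ell}\bigl(P(n)\cdot\mathrm{hom}(\mathbf F_j,\mathbf A_n)\bigr)=P(n)^{\ell}\prod_{j=1}^{\ell}\mathrm{hom}(\mathbf F_j,\mathbf A_n).$$

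Since $(\mathbf A_n)_{n\in\bbbn}$ is strongly polynomial, each factor $\mathrm{hom}(\mathbf F_j,\mathbf A_n)$ is a polynomial in $n$ by Theorem~\ref{thm:equiv}, and $P(n)^{\ell}$ is visibly a polynomial in $n$. The product of polynomials is a polynomial, so condition (iii) of Theorem~\ref{thm:equiv} holds for $(P(n)\,\mathbf A_n)$ and the sequence is strongly polynomial.

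There is essentially no serious obstacle: the hypothesis $P(n)\in\bbbn$ is used only to make the disjoint union $P(n)\,\mathbf A_n$ well-defined as a relational structure. The degenerate cases behave correctly with the formula above, namely $P(n)=0$ forces both sides to vanish when $\ell\geq 1$, and the empty $\mathbf F$ (with $\ell=0$) gives the standard value $1$ on both sides under the convention that the empty product equals $1$.
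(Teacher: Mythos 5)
Your proposal is correct and follows the same route as the paper: reduce to homomorphism counts, split $\mathbf F$ into connected components, and use the identity $\mathrm{hom}(\mathbf F, P(n)\,\mathbf A_n)=\prod_j P(n)\,\mathrm{hom}(\mathbf F_j,\mathbf A_n)$. Your additional remarks on the Gaifman graph and the degenerate cases are accurate but not needed beyond what the paper records.
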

\begin{proof}
By Lemma~\ref{lem:QFhom} it is sufficient to check polynomiality of 
${\rm hom}(\mathbf F,P(n)\mathbf A_n)$. Let $\mathbf F_1,\dots,\mathbf F_\ell$ be the connected components
of $\mathbf F$. Then the proof follows from the identity
$${\rm hom}(\mathbf F,P(n)\,\mathbf A_n)=\prod_{j=1}^\ell{\rm hom}(\mathbf F_j,P(n)\,\mathbf A_n)
=\prod_{j=1}^\ell(P(n)\,{\rm hom}(\mathbf F_j,\mathbf A_n)).
$$
\end{proof}

\begin{lemma}
\label{lem:extractP}
Let $(\mathbf A_n)_{n\in\bbbn}$ be a strongly polynomial sequence of $\lambda$-structures and let $P$ be a polynomial such that $P(n)\in\mathbb{N}$ for $n\in\mathbb{N}$.
Then the sequence $(\mathbf A_{P(n)})_{n\in\bbbn}$ is strongly polynomial.
\end{lemma}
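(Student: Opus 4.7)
The plan is to use the equivalent formulation given in Theorem~\ref{thm:equiv} and then exploit the fact that the composition of two polynomials is again a polynomial. Concretely, I would work with characterisation (ii) (or equivalently (iii)) and observe that the argument is a straightforward substitution.

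First, fix a quantifier-free formula $\phi\in{\rm QF}(\lambda)$. Since $(\mathbf A_n)_{n\in\bbbn}$ is strongly polynomial, there exists a polynomial $P_\phi$ such that $|\phi(\mathbf A_m)|=P_\phi(m)$ for every $m\in\bbbn$. By hypothesis, $P(n)\in\bbbn$ for every $n\in\bbbn$, so we may apply this identity with $m=P(n)$, obtaining
$$
|\phi(\mathbf A_{P(n)})|=P_\phi(P(n))\qquad\text{for every }n\in\bbbn.
$$
The right-hand side is the composition $P_\phi\circ P$ of two polynomials, hence itself a polynomial in $n$. Setting $\widetilde P_\phi:=P_\phi\circ P$ gives the required polynomial for the sequence $(\mathbf A_{P(n)})_{n\in\bbbn}$. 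Since $\phi$ was an arbitrary quantifier-free formula, $(\mathbf A_{P(n)})_{n\in\bbbn}$ satisfies condition (ii) of Theorem~\ref{thm:equiv}, hence is strongly polynomial.

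There is essentially no obstacle here: the only thing to verify is that $P(n)$ is a valid index, which is guaranteed by the assumption $P(n)\in\bbbn$, and that polynomial composition preserves being a polynomial, which is immediate. One could phrase the same argument via characterisation (iii) by writing ${\rm hom}(\mathbf F,\mathbf A_{P(n)})=P_{\mathbf F}(P(n))$ for each $\lambda$-structure $\mathbf F$; either route yields the conclusion directly, without recourse to Lemma~\ref{lem:QFhom}.
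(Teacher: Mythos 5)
Your proof is correct and takes essentially the same approach as the paper: both arguments invoke the definition of strong polynomiality to obtain, for each quantifier-free $\phi$, a polynomial $Q$ with $|\phi(\mathbf A_m)|=Q(m)$, and then observe that $|\phi(\mathbf A_{P(n)})|=(Q\circ P)(n)$ is again a polynomial in $n$. The paper's proof is just a terser version of the same composition argument.
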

\begin{proof}
For every $\phi\in{\rm QF}(\lambda)$ there is a polynomial $Q$ such that
$|\phi(\mathbf A_n)|=Q(n)$ for each $n\geq 1$, as $(\mathbf A_n)_{n\in\bbbn}$ is a strongly polynomial sequence.
Thus $|\phi(\mathbf A_{P(n)})|=Q\circ P(n)$.
It follows that the sequence $(\mathbf A_{P(n)})_{n\in\bbbn}$ is strongly polynomial.
\end{proof}

\section{Basic structures}
\label{sec:basic}
Two special types of marked structures will be of particular interest in this paper:
\begin{itemize}
\item $\mathbf E$ is a structure whose domain is a singleton, and whose signature is a single unary relation $U$ that satisfies $(\forall x)\ U(x)$;
\item $\mathbf T_n$ is a transitive tournament of order $n$. Precisely, the domain of $\mathbf T_n$ is $[n]=\{1,\dots,n\}$ and its signature contains a single binary relation $S$ with 
$\mathbf{T}_n\models S(i,j)\; \iff\; i<j$, and a single unary relation $U$, which satisfies $(\forall x)\ U(x)$.
\end{itemize}

\begin{definition}\label{def:basic}
A {\em basic structure with parameters} $(k,\ell)$ is a structure 
$$\mathbf{B}=\overbrace{\mathbf E\oplus\dots\oplus\mathbf E}^{\ell\text{ times}}\oplus \overbrace{\mathbf T_{N_1}\oplus\dots\oplus\mathbf T_{N_k}}^{k\text{ times}}.$$
which is the strong sum of $\ell$ marked vertices
and $k$ marked transitive tournaments 
of respective orders $N_1(\mathbf{B}),\dots,N_k(\mathbf{B})$.
We denote by $\mathcal B_{k,\ell}$ the class
of all basic structures with parameters $(k,\ell)$ and
by $\beta_{k,\ell}$ the signature of these structures. It will be notationally convenient to 
assume that the relations in $\beta_{k,\ell}$ are
$U_1^E,\dots,U_\ell^E$, $U_1^T,\dots, U_k^T$, $S_1,\dots,S_k$. 


A {\em basic sequence} 
 is a sequence $(\mathbf B_n)_{n\in\bbbn}$ of basic structures 
$\mathbf B_n\in\mathcal B_{k,\ell}$ (for some fixed $k,\ell\in\mathbb{N}$) such that 
there are non-constant polynomials $Q_i$, $1\leq i\leq k$ with $Q_i(n)=N_i(\mathbf{A}_n)$
(for every $1\leq i\leq k$ and $n\in\bbbn$).
\end{definition}

It follows directly from Lemmas~\ref{lem:extractP} and~\ref{lem:union} that every basic sequence is strongly polynomial. 

\section{Strongly polynomial sequences by interpretations}
The basic building blocks we use for constructing strongly polynomial graph sequences are marked tournaments $(\mathbf{T}_{P(n)})$ on a polynomial number of vertices, and the constant sequence $(\mathbf E)$ consisting of a single marked vertex. From these we can produce all the strongly polynomial sequences given in~\cite{dlHJ95} and~\cite{GGN} and much more. (In Section~\ref{sec:ex} we give a large selection of examples of strongly polynomial graph sequences that exhibits their diversity.) To do this we need just two operations: strong sum and graphical interpretation of structures. The latter is a potent operation for it produces graph sequences from strongly polynomial sequences of $\lambda$-structures of arbitrary signature $\lambda$, while strong sum is an essential operation for gluing together separately constructed sequences, from which a sequence of larger structures can be made. 
(Note that one could equivalently consider disjoint union and QF-interpretation in place of strong sum and QF-interpretation.) 
\subsection{Interpretation schemes}
We begin with the definition of an interpretation scheme that we shall require. 

\begin{definition}\label{def:int}
Let $\kappa,\lambda$ be signatures, where the signature $\lambda$ has $q$ relational symbols $R_1,\dots, R_q$ with respective arities $r_1,\dots, r_q$. An {\em interpretation scheme} $I$ of $\lambda$-structures in $\kappa$-structures with exponent $p$ is a tuple $I=(p,\rho_0,\dots,\rho_q$), where $p$ is a positive integer, $\rho_0\in{\rm FO}_p(\kappa)$, and $\rho_i\in{\rm FO}_{pr_i}(\kappa)$, for $1\leq i\leq q$.

 For $\kappa$-structure $\mathbf A$, we denote by $I(\mathbf A)$ the $\lambda$-structure $\mathbf B$ with  domain 
$B=\rho_0(\mathbf A)$ and relations defined by  
$$\mathbf B\models R_i(\mathbf{v}_1,\dots, \mathbf{v}_{r_i})\quad\Longleftrightarrow \quad\mathbf A\models\rho_i(\mathbf{v}_1,\dots, \mathbf{v}_{r_i})$$
(for $1\leq i\leq q$ and $\mathbf{v}_1,\dots, \mathbf{v}_{r_i}\in B$).
\end{definition}

\begin{definition}\label{def:QFint}
A {\em QF-interpretation scheme} is an interpretation scheme in which all the formulas $\rho_i, 0\leq i\leq q$, used to define it in Definition~\ref{def:int} are quantifier-free.
\end{definition}

\begin{example}
Let us consider two signatures $\kappa$ and $\lambda$ with $\kappa\subset\lambda$. Then the following transformations are easily (and almost trivially) checked to be definable by QF-interpretation schemes:
\begin{itemize}
\item {\em Lift}, the canonical injection of $\kappa$-structures into a $\lambda$-structures (same relations);
\item {\em Forget}, the canonical projection of $\lambda$-structures onto $\kappa$-structures (filters out relations not in $\kappa$);
\item {\em Merge}, which maps $\lambda\sqcup\lambda$-structures into $\lambda$-structures by merging similar relations (so that ${\rm Merge}(\mathbf{A}\oplus\mathbf{B})=\mathbf{A}+\mathbf{B}$);
\item {\em Mark}, which maps $\lambda$-structures into $\lambda^+$-structures (where $\lambda^+$ is the signature obtained by adding to $\lambda$ a new unary relation $U$) by putting every element in the relation $U$.
\end{itemize}
\end{example}


Since our goal is to construct strongly polynomial sequences of graphs, we shall have a particular use for interpretation schemes of graph structures in $\kappa$-structures. 
\begin{definition}\label{def:graphint}
A {\em graphical interpretation scheme} $I$ of $\kappa$-structures is a triple $(p,\iota,\rho)$, where $p$ is a positive integer, $\iota\in{\rm FO}_p(\kappa)$, and $\rho\in{\rm FO}_{2p}(\kappa)$ is symmetric (that is, such that $\vdash \phi(x,y)\leftrightarrow\phi(y,x)$).
For every $\kappa$-structure $\mathbf A$, 
the interpretation $I(\mathbf A)$ has vertex set 
$V=\iota(\mathbf A)$
and edge set 
$$E=\{\{\mathbf u,\mathbf v\}\in V\times V:\ \mathbf A\models \rho(\mathbf u,\mathbf v)\}.$$
\end{definition}

We have already mentioned a graphical interpretation scheme of digraph structures: that which interprets an orientation of a graph as the underlying undirected graph simply by forgetting the edge directions (for example $K_n$ from  $\mathbf T_n$). Taking the complement of a graph $G$ is a graphical interpretation scheme (of graph structures) with $p=1$ 
in which we take $\iota=1$ (constantly true), and $\rho(x,y)=\neg R(x,y)$, where $R(x,y)$ represents adjacency between $x$ and $y$. The square of the graph $G$, joining vertices $x$ and $y$ when they are adjacent or share a common neighbour, is a graphical interpretation scheme (of graph structures) with $p=1$, $\iota=1$, and $\rho(x,y)=R(x,y)\vee (\exists{z}\; R(x,z)\wedge R(z,y))$ (this one requires a quantifier). 
The line graph of a simple undirected graph $G$ can be realized indirectly: orient the edges $G$ and use a graphical interpretation scheme of digraph structures with $p=2$, by taking $\iota(x,y)=R(x,y)$, where $R$ is the antisymmetric relation representing oriented edges of $G$, and $\rho(x_1,y_1,x_2,y_2)=[(x_1=y_2)\vee (y_1=y_2)\vee (x_1=y_2)\vee (x_2=y_1)]\wedge \neg[(x_1=x_2)\wedge(y_1=y_2)]\wedge\neg[(x_1=y_2)\wedge(x_2=y_1)]$. A more natural way to define a graphical interpretation scheme (of graph structures) for the operation of taking the line graph requires the general interpretation schemes discussed in Section~\ref{sec:Discussion}. Compare  also Remark~\ref{rem:clique_intersect} in Section~\ref{sec:Johnson}.

The following standard result from model theory (see for example~\cite[Section 3.4]{L09}) underlies the key role interpretation will play in moving from one strongly polynomial sequence of structures to another.

Let $I=(p,\rho_0,\dots,\rho_q)$ be an interpretation scheme of $\lambda$-structures in $\kappa$-structures.
We inductively define the mapping $M_I$ from terms in  ${\rm FO}(\lambda)$ to terms in  ${\rm FO}(\kappa)$ by:
\begin{itemize}
\item $M_{I}(x_i)=(x_{i,1},\dots,x_{i,p})$ for variable $x_i$;
\item $M_{I}(R_i(t_1,\dots,t_{r_i}))=
\rho_i(M_{I}(t_{1}),\dots,M_{I}(t_{r_i}))$;
\item $M_{I}(\phi\vee\psi)=M_{I}(\phi)\vee M_{I}(\psi)$;
\item $M_{I}(\phi\wedge\psi)=M_{I}(\phi)\wedge M_{I}(\psi)$;
\item $M_{I}(\neg\phi)=\neg M_{I}(\phi)$;
\item $M_{I}((\exists x)\ \phi)=(\exists x_1\dots\exists x_p)\ \bigwedge_{i=1}^p\rho_0(x_i)\wedge M_{I}(\phi)$;
\item $M_{I}((\forall x)\ \phi)=(\forall x_1\dots\forall x_p)\ \bigwedge_{i=1}^p\rho_0(x_i)\rightarrow M_{I}(\phi)$.
\end{itemize}
Then we define the mapping $\tilde{I}:{\rm FO}(\lambda)\rightarrow{\rm FO}(\kappa)$ by
$$\tilde{I}(\phi)=\bigwedge_{i=1}^k \rho_0(x_i)\,\wedge\,M_I(\phi),$$
where $x_1,\dots,x_k$ are the free variables of $\phi$.
 
Note that if all the $\rho_i$ are quantifier-free then
$\tilde{I}$ maps quantifier-free formulas to quantifier-free formulas.

\begin{lemma}\label{lem:dual} If $I$ is an interpretation scheme of $\lambda$-structures in $\kappa$-structures then for every $\phi\in{\rm FO}_r(\lambda)$ and every $\kappa$-structure~$\mathbf A$ we have
$$\phi(I(\mathbf A))=\tilde{I}(\phi)(\mathbf{A}).$$
\end{lemma}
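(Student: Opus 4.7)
The plan is a standard structural induction on $\phi$, proving the equivalent pointwise statement: for any $\mathbf{v}_1,\dots,\mathbf{v}_k\in B=\rho_0(\mathbf{A})$ (viewed as $p$-tuples in $A^p$),
$$I(\mathbf A)\models \phi(\mathbf v_1,\dots,\mathbf v_k)\quad\Longleftrightarrow\quad \mathbf A\models M_I(\phi)(\mathbf v_1,\dots,\mathbf v_k).$$
The outer conjunction $\bigwedge_{i=1}^k\rho_0(x_i)$ in the definition of $\tilde I(\phi)$ then exactly cuts out those $rp$-tuples in $A^{rp}$ whose $r$ consecutive $p$-blocks all lie in $B$, which is what is meant by identifying $B^r$ with its image in $A^{rp}$. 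So proving the pointwise equivalence above suffices for the set-level identity $\phi(I(\mathbf A))=\tilde I(\phi)(\mathbf A)$.

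For the base cases, atomic formulas $R_i(x_{j_1},\dots,x_{j_{r_i}})$ are handled directly by the defining clause of $I(\mathbf A)$: $I(\mathbf A)\models R_i(\mathbf v_{j_1},\dots,\mathbf v_{j_{r_i}})$ holds precisely when $\mathbf A\models \rho_i(\mathbf v_{j_1},\dots,\mathbf v_{j_{r_i}})$, which is the content of $M_I(R_i(\dots))=\rho_i(\dots)$. For equality atoms $x_i=x_j$, the two $p$-tuples $\mathbf v_i,\mathbf v_j\in A^p$ coincide iff they are componentwise equal, which $M_I$ translates coordinate-by-coordinate. The Boolean step is essentially a tautology: $M_I$ was defined to commute with $\vee$, $\wedge$, and $\neg$, so the inductive hypothesis transfers immediately.

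The main bit of genuine content (and the step most in need of care) is the quantifier clause. For $(\exists x)\phi$, by Tarski's semantic definition and the inductive hypothesis,
$$I(\mathbf A)\models (\exists x)\phi(\mathbf v_1,\dots)\ \iff\ \exists\,\mathbf v\in B\colon I(\mathbf A)\models\phi(\mathbf v,\mathbf v_1,\dots)\ \iff\ \exists\,\mathbf v\in A^p\colon \mathbf A\models\rho_0(\mathbf v)\wedge M_I(\phi)(\mathbf v,\mathbf v_1,\dots),$$
which is precisely $\mathbf A\models M_I((\exists x)\phi)(\mathbf v_1,\dots)$ once one unfolds $\exists\,\mathbf v\in A^p$ as $\exists x_1\dots\exists x_p$. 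The universal case is dual, using the implication form of relativization supplied in the definition of $M_I$. The only real obstacle is notational bookkeeping: keeping straight that each $\lambda$-variable $x_i$ contributes a $p$-block $(x_{i,1},\dots,x_{i,p})$ of $\kappa$-variables, and that ``$\mathbf v\in B$'' on the left corresponds on the right to both the existence of the $p$-tuple $\mathbf v\in A^p$ and the extra hypothesis $\rho_0(\mathbf v)$. The final observation (used later but visible from the construction) is that if all $\rho_i$ are quantifier-free, then no quantifiers are ever introduced by $M_I$ applied to a quantifier-free $\phi$, so $\tilde I$ restricts to a map $\mathrm{QF}(\lambda)\to\mathrm{QF}(\kappa)$.
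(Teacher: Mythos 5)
Your proof is correct, and it is exactly the standard structural induction on $\phi$ that one would expect. Note, however, that the paper does \emph{not} supply its own proof of this lemma; it simply cites it as ``standard from model theory'' with a reference to Lascar's textbook. So you are not taking a different route from the paper so much as filling in an argument the paper leaves implicit.

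A few small points worth flagging. First, you correctly observe that the definition of $M_I$ in the paper has no explicit clause for atomic equalities $x_i=x_j$: since $I(\mathbf A)$ lives on honest $p$-tuples (no quotienting), the right translation is indeed coordinatewise equality $\bigwedge_{m=1}^p(x_{i,m}=x_{j,m})$, and you supply this. This becomes important later when the paper generalizes to interpretation schemes with an equivalence relation $\varpi$ (Section~\ref{sec:Discussion}), where equality would instead translate to $\varpi$; your proof, correctly, handles only the non-quotiented case that the lemma is actually about. Second, the paper's displayed clause $M_I((\exists x)\,\phi)=(\exists x_1\dots x_p)\,\bigwedge_{i=1}^p\rho_0(x_i)\wedge M_I(\phi)$ is typographically misleading ($\rho_0$ has $p$ free variables, not one); your reading, relativizing by a single instance $\rho_0(x_1,\dots,x_p)$ for the new $p$-block, is the intended one and is the one the induction actually needs. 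Third, your closing remark that $\tilde I$ sends $\mathrm{QF}(\lambda)$ into $\mathrm{QF}(\kappa)$ when all $\rho_i$ are quantifier-free is exactly the observation the paper records immediately before stating the lemma, and it is what makes the lemma useful for Corollary~\ref{cor:IntPreserveSP}. In short: a correct, complete, and appropriately careful rendering of a proof the paper chose to omit.
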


As a corollary of Lemma~\ref{lem:dual} we have

\begin{corollary}\label{cor:IntPreserveSP}
If $(\mathbf A_n)_{n\in\bbbn}$ is a strongly polynomial sequence of $\kappa$-structures and if
$I$ is a QF-interpretation scheme of $\lambda$-structures in $\kappa$-structures, then $(I(\mathbf A_n))_{n\in\bbbn}$ is a strongly polynomial sequence of $\lambda$-structures.
\end{corollary}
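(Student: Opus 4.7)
The plan is to reduce the corollary to a one-line application of Lemma~\ref{lem:dual} together with the definition of strong polynomiality. By the definition of strong polynomiality (Definition~\ref{def:polyseq}), I need to exhibit, for every quantifier-free formula $\phi \in {\rm QF}(\lambda)$, a polynomial $P_\phi$ such that $|\phi(I(\mathbf A_n))| = P_\phi(n)$ for every $n \in \bbbn$.

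First, fix $\phi \in {\rm QF}(\lambda)$ with $r$ free variables, and consider its translation $\tilde{I}(\phi) \in {\rm FO}(\kappa)$, which has $rp$ free variables (where $p$ is the exponent of $I$). The key observation is that $\tilde{I}(\phi)$ is itself quantifier-free: inspection of the inductive definition of $M_I$ shows that the only way quantifiers can appear in $\tilde{I}(\phi)$ is either through one of the formulas $\rho_0, \dots, \rho_q$ used in the atomic clauses, or through the $\exists/\forall$ cases arising from quantifiers already present in $\phi$. Since $\phi$ is quantifier-free and $I$ is a QF-interpretation scheme, both sources are absent, so $\tilde{I}(\phi) \in {\rm QF}(\kappa)$.

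Now apply Lemma~\ref{lem:dual} to the $\kappa$-structure $\mathbf A_n$: this gives $\phi(I(\mathbf A_n)) = \tilde{I}(\phi)(\mathbf A_n)$ as sets (using the identification of elements of the domain of $I(\mathbf A_n)$ with the $p$-tuples in $A_n^p$ satisfying $\rho_0$), so in particular $|\phi(I(\mathbf A_n))| = |\tilde{I}(\phi)(\mathbf A_n)|$. Since $(\mathbf A_n)_{n \in \bbbn}$ is strongly polynomial and $\tilde{I}(\phi)$ is quantifier-free, the right-hand side is equal to $P_{\tilde{I}(\phi)}(n)$ for some polynomial $P_{\tilde{I}(\phi)}$, giving the desired polynomial $P_\phi := P_{\tilde{I}(\phi)}$ for $\phi$.

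There is no real obstacle here once Lemma~\ref{lem:dual} is in hand; the only point requiring a moment's thought is the preservation of quantifier-freeness under $\tilde{I}$, which is visible directly from the inductive definition of $M_I$.
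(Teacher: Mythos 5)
Your proof is correct and follows exactly the route the paper intends: the paper states this as an immediate corollary of Lemma~\ref{lem:dual}, having just noted that $\tilde I$ maps quantifier-free formulas to quantifier-free formulas when $I$ is a QF-interpretation scheme, and you simply spell out this two-step argument (including the correct cardinality identification between $r$-tuples of $p$-tuples and $rp$-tuples).
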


The interest of marked structures is that marking provides a way to track components of strong sums, thus allowing the combination of componentwise defined interpretation schemes, as shown by the next lemma.

\begin{lemma}
\label{lem:mergeInt}
Let $I_i$ ($1\leq i\leq k$) be interpretation schemes of $\lambda_i$-structures in $\kappa_i$-structures with exponent $p_i$.
Assume that each $\kappa_i$ contains a unary relation $U_i$.

Then there exists an interpretation $I$ scheme of $\bigsqcup_{i=1}^k\lambda_i$-structures in $\bigsqcup_{i=1}^k\kappa_i$-structures
with exponent $p=\max p_i$ such that, for every  $\kappa_i$-structure $\mathbf{A}_i$ marked by $U_i$ ($1\leq i\leq k$), we have
$$I\bigl(\bigoplus_{i=1}^k\mathbf{A}_i\bigr)=\bigoplus_{i=1}^k I_i(\mathbf{A}_i).$$
Moreover, if all the $I_i$'s are QF-interpretation schemes, we can require $I$ to be a QF-interpretation scheme.
\end{lemma}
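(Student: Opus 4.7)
The plan is to build $I$ componentwise, using the unary marks $U_i$ to recognize inside a strong sum which summand any given vertex belongs to. I set $p=\max_i p_i$, and to reconcile the differing exponents $p_i$ I pad each $p_i$-tuple $(v_1,\dots,v_{p_i})$ to the $p$-tuple $(v_1,\dots,v_{p_i},v_{p_i},\dots,v_{p_i})$ by repeating the last entry, with corresponding quantifier-free predicate $\mathrm{pad}_i(x_1,\dots,x_p):=\bigwedge_{j=p_i+1}^{p}(x_j=x_{p_i})$.

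Writing $I_i=(p_i,\rho_0^{(i)},\rho_1^{(i)},\dots,\rho_{q_i}^{(i)})$, I define the domain formula of $I$ by
$$\rho_0(x_1,\dots,x_p)\;=\;\bigvee_{i=1}^{k}\Bigl(\bigwedge_{j=1}^{p}U_i(x_j)\;\wedge\;\mathrm{pad}_i(x_1,\dots,x_p)\;\wedge\;\rho_0^{(i)}(x_1,\dots,x_{p_i})\Bigr),$$
and for each relation symbol $R$ belonging to a (unique) $\lambda_{i_0}$, with associated formula $\rho_R^{(i_0)}$ in $I_{i_0}$ and arity $r$, I define the new formula on $p$-tuples $\mathbf v_1,\dots,\mathbf v_r$ to be the conjunction of $U_{i_0}$ applied to every coordinate of every $\mathbf v_s$, the padding conditions $\mathrm{pad}_{i_0}(\mathbf v_s)$, and $\rho_R^{(i_0)}$ evaluated on the first $p_{i_0}$ coordinates of each $\mathbf v_s$.

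Correctness then rests on two routine observations. First, because each $\mathbf A_i$ is marked by $U_i$, in $\bigoplus_j\mathbf A_j$ the predicate $U_i$ holds exactly on $A_i$, so at most one disjunct of $\rho_0$ is satisfiable at any tuple; this identifies $\rho_0(\bigoplus_i\mathbf A_i)$, via the padding bijection, with the disjoint union $\bigsqcup_i\rho_0^{(i)}(\mathbf A_i)$, which is the domain of $\bigoplus_i I_i(\mathbf A_i)$. Second, on tuples whose coordinates all lie in $A_{i_0}$ the formula for $R$ reduces to $\rho_R^{(i_0)}$ evaluated in $\mathbf A_{i_0}$ (because relations of $\kappa_{i_0}$ in the strong sum agree with those of $\mathbf A_{i_0}$ on such tuples); on mixed tuples the $U_{i_0}$ conjunct fails, while $R$ is also false in $\bigoplus_i I_i(\mathbf A_i)$ by the definition of strong sum, so the two structures coincide.

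The QF-preservation clause then follows immediately: if each $\rho_j^{(i)}$ is quantifier-free, the only additional building blocks used are unary atoms $U_i(\cdot)$ and equalities, so $\rho_0$ and every $\rho_R$ stay quantifier-free, making $I$ a QF-interpretation scheme. No genuine obstacle arises; the only step requiring care is the bookkeeping needed to embed tuples of different arities into the common exponent $p$, and this is handled uniformly by the formulas $\mathrm{pad}_i$.
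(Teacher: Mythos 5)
Your construction is essentially identical to the paper's: same exponent $p=\max_i p_i$, same use of the marks $U_i$ to identify the summand a tuple lives in, and the same repetition trick to pad shorter tuples to length $p$ (the paper anchors the repeated block at $x_p$ and you anchor it at $x_{p_i}$, but these impose the same constraint). The minor cosmetic difference that you restate the padding conditions inside the relation formulas while the paper omits them there is immaterial, since the relation formulas are only ever evaluated on tuples already in $\rho_0(\mathbf A)$.
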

\begin{proof}
For $1\leq i\leq k$, let 
 $I_i=(p_i,\rho_0^i,\dots,\rho^i_{q_i})$, where $\rho^i_j$ has $r_{i,j}p_i$ free variables ($1\leq j\leq q_i$). We define the interpretation scheme
$I=(p,\rho_0,\rho_{i,j})$ ($1\leq i\leq k$, $1\leq j\leq q_i$) 
with exponent $p$ 
of $\bigsqcup_{i=1}^k\lambda_i$-structures in $\bigsqcup_{i=1}^k\kappa_i$-structures as follows:
the formula $\rho_0$ is
$$
\rho_0:\ \bigvee_{i=1}^k \biggl(
\bigwedge_{j=1}^{p_i}U_i(x_j)\wedge
\bigwedge_{j=p_i}^{p}(x_j=x_p)\wedge
\rho_{0}^i(x_1,\dots,x_{p_i})
\biggr)
$$
and the formula $\rho_{i,j}$ (with $r_{i,j}p$ free variables) is defined by 
$$
\rho_{i,j}:\ \bigwedge_{\ell=1}^{r_{i,j}p}U_i(x_\ell)\wedge\rho^i_j(x_1,\dots,x_{p_i},
x_{p+1},\dots,x_{p+p_i},\dots,x_{(r_{i,j}-1)p+1},\dots,x_{(r_{i,j}-1)p+p_i}).
$$
Then $I$ obviously satisfies the requirements of the lemma statement.
\end{proof}

QF-interpretation schemes of strong sums are instrumental in the construction of strongly polynomial sequences, as exemplified by the next result.


\begin{corollary}\label{cor:prod}
\label{lem:const}
Let $(\mathbf A_{n})_{n\in\bbbn}$ and $(\mathbf B_{n})_{n\in\bbbn}$ be strongly polynomial sequences of graphs.
Then $(\mathbf A_{n}+\mathbf B_{n})_{n\in\bbbn}$, $(\mathbf A_{n}\times \mathbf B_{n})_{n\in\bbbn}$, $(\mathbf A_{n}\Box\,\mathbf B_{n})_{n\in\bbbn}$,
$(\mathbf A_{n}\boxtimes\,\mathbf B_{n})_{n\in\bbbn}$,
and $(\mathbf A_{n}[\mathbf B_{n}])_{n\in\bbbn}$ are strongly polynomial sequence of graphs (formed respectively by disjoint union, direct product, Cartesian product, strong product, and lexicographic product).
\end{corollary}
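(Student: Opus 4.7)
The plan is to realize each of the five product operations as a QF-interpretation of the strong sum of the two sequences, then invoke the machinery already developed (Lemma~\ref{lem:union} and Corollary~\ref{cor:IntPreserveSP}).

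First I would arrange for each $\mathbf{A}_n$ and $\mathbf{B}_n$ to be marked: apply the QF-interpretation scheme \emph{Mark} to each sequence to get sequences of $\{R_A,U_A\}$-structures and $\{R_B,U_B\}$-structures respectively, where $R_A,R_B$ are the symmetric adjacency relations and $U_A,U_B$ are unary marks that hold on every vertex. Strong polynomiality is preserved by Corollary~\ref{cor:IntPreserveSP}. By Lemma~\ref{lem:union}, the sequence of strong sums $(\mathbf{A}_n\oplus\mathbf{B}_n)_{n\in\bbbn}$ is then a strongly polynomial sequence of $\{R_A,U_A\}\sqcup\{R_B,U_B\}$-structures.

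Next I would exhibit, for each product, an explicit graphical QF-interpretation scheme $I=(p,\iota,\rho)$ over the signature $\{R_A,U_A,R_B,U_B\}$ such that $I(\mathbf{A}_n\oplus\mathbf{B}_n)$ is the desired product graph. For disjoint union take $p=1$, $\iota(x)=U_A(x)\vee U_B(x)$, and $\rho(x,y)=R_A(x,y)\vee R_B(x,y)$. For all four proper products take $p=2$ with $\iota(x_1,x_2)=U_A(x_1)\wedge U_B(x_2)$, and set $\rho((x_1,x_2),(y_1,y_2))$ to be
\begin{itemize}
\item $R_A(x_1,y_1)\wedge R_B(x_2,y_2)$ for the direct product;
\item $(x_1=y_1\wedge R_B(x_2,y_2))\vee(x_2=y_2\wedge R_A(x_1,y_1))$ for the Cartesian product;
\item the disjunction of the two previous formulas for the strong product;
\item $R_A(x_1,y_1)\vee(x_1=y_1\wedge R_B(x_2,y_2))$ for the lexicographic product.
\end{itemize}
Each $\rho$ is manifestly quantifier-free and symmetric in its two $p$-tuples of arguments, so each $I$ is a legitimate graphical QF-interpretation scheme in the sense of Definition~\ref{def:graphint}. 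Corollary~\ref{cor:IntPreserveSP} then yields strong polynomiality of $(I(\mathbf{A}_n\oplus\mathbf{B}_n))_{n\in\bbbn}$, which equals the product sequence by inspection.

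There is no real obstacle here: the content of the corollary is exactly that all standard graph products are QF-definable from the disjoint signature union, which is precisely what the strong sum packages up. The only point requiring care is verifying the symmetry of each $\rho$, together with the bookkeeping to check that the edge set produced by $I$ coincides with the textbook definition of the product; both are routine.
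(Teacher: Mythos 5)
Your proof is correct and follows exactly the route the paper sketches: mark each sequence, form the strong sum (which is strongly polynomial by Lemma~\ref{lem:union}), and exhibit each product as a graphical QF-interpretation of $\mathrm{Mark}(\mathbf{A}_n)\oplus\mathrm{Mark}(\mathbf{B}_n)$, then invoke Corollary~\ref{cor:IntPreserveSP}. The paper's proof is a single sentence stating this; you have simply written out the explicit interpretation schemes, which are all correct.
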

\begin{proof}
This follows from Corollary~\ref{cor:IntPreserveSP} and Lemma~\ref{lem:union}, by noticing that all the constructions listed here are QF-interpretations of ${\rm Mark}(\mathbf A_n)\oplus{\rm Mark}(\mathbf B_n)$.
\end{proof}


Many more constructions can be used to combine strongly polynomial sequences of structures by means of strong sum and QF-interpretations, of which the following is an example.

\begin{example}
Let $t$ be a fixed odd integer, and 
let $(\mathbf A_{i,n})_{n\in\bbbn}$ ($1\leq i\leq t$) be a strongly polynomial sequences of graphs. We define $\mathbf B_n$ as the graph with vertex set 
$A_{1,n}\times\dots\times A_{t,n}$ where 
$(u_1,\dots,u_t)$ is adjacent to $(v_1,\dots,v_t)$ in $\mathbf B_n$ if there is a majority of $i\in\{1,\dots,t\}$ such that $u_i$ is adjacent to $v_i$ in $\mathbf A_{i,n}$. Then $(\mathbf{B}_n)_{n\in\bbbn}$ is a strongly polynomial sequence.
\end{example}

\section{Interpretations of basic sequences}

As already noted in Section~\ref{sec:basic}, every basic sequence is strongly polynomial. It follows from Lemma~\ref{cor:IntPreserveSP} that this is also the case for their QF-interpretations:

\begin{corollary}
\label{cor:intBasic}
If $(\mathbf B_n)$ is a basic sequence and $I$ is a QF-interpretation of $\lambda$-structures in $\beta_{k,\ell}$-structures, then $(I(\mathbf B_n))$ is a strongly polynomial sequence of $\lambda$-structures.
\end{corollary}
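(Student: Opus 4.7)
The statement reduces to two ingredients that the paper has essentially put in place already: every basic sequence is strongly polynomial, and QF-interpretations preserve strong polynomiality (Corollary~\ref{cor:IntPreserveSP}). The task is thus to assemble these pieces cleanly.

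First, I would verify that any basic sequence $(\mathbf B_n)$ is strongly polynomial. The constant sequence $(\mathbf E)$ is trivially polynomial. For the sequence $(\mathbf T_n)$ of marked transitive tournaments, Theorem~\ref{thm:equiv} reduces the task to showing that ${\rm hom}(\mathbf F,\mathbf T_n)$ is a polynomial in $n$ for every finite $\{U,S\}$-structure $\mathbf F$. Since $U$ is satisfied throughout $\mathbf T_n$, this count equals the number of maps $F\to[n]$ that are strictly increasing along the $S$-relation of $\mathbf F$: if that relation contains a directed cycle the count is zero, and otherwise $S$ generates a strict partial order on $F$ and we recover the classical order polynomial, which is a polynomial in $n$. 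From here Lemma~\ref{lem:extractP} gives that each $(\mathbf T_{Q_i(n)})_{n\in\bbbn}$ is strongly polynomial, and Lemma~\ref{lem:union} combines these with $\ell$ copies of the constant sequence $(\mathbf E)$ by strong sum to conclude $(\mathbf B_n)_{n\in\bbbn}$ is strongly polynomial.

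With strong polynomiality of $(\mathbf B_n)$ established, the corollary follows at once from Corollary~\ref{cor:IntPreserveSP}: a QF-interpretation scheme $I$ of $\lambda$-structures in $\beta_{k,\ell}$-structures sends a strongly polynomial sequence of $\beta_{k,\ell}$-structures to a strongly polynomial sequence of $\lambda$-structures, so $(I(\mathbf B_n))_{n\in\bbbn}$ is strongly polynomial.

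The only piece requiring any actual work is the order-polynomial claim for $(\mathbf T_n)$, and even this is elementary. A self-contained justification stratifies homomorphisms $f\colon \mathbf F\to\mathbf T_n$ according to the equivalence relation on $F$ indicating which vertices share an image. For each such partition $\Theta$ compatible with $S$ (the others contribute zero), $f$ factors as a strictly increasing injection of the quotient poset $\mathbf F/\Theta$ into $[n]$, and the number of such injections is $\binom{n}{|F/\Theta|}$, a polynomial in $n$. Summing over the finitely many partitions of $F$ yields polynomiality of ${\rm hom}(\mathbf F,\mathbf T_n)$, completing the only nontrivial step.
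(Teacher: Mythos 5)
Your overall approach is correct and is essentially the paper's: the paper dispatches this corollary in a single sentence by citing that basic sequences are strongly polynomial (which Section~3 attributes to Lemmas~\ref{lem:extractP} and~\ref{lem:union}) together with Corollary~\ref{cor:IntPreserveSP}. You usefully fill in a gap the paper leaves tacit, namely that the base sequences $(\mathbf E)$ and $(\mathbf T_n)$ themselves must first be known to be strongly polynomial before Lemmas~\ref{lem:extractP} and~\ref{lem:union} can be applied; the order-polynomial observation does exactly that.

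One small slip in your self-contained justification: for a fixed kernel partition $\Theta$ compatible with $S$, the number of strict order-preserving \emph{injections} of the quotient poset $\mathbf F/\Theta$ into $[n]$ is not $\binom{n}{|F/\Theta|}$ but $e(\mathbf F/\Theta)\cdot\binom{n}{|F/\Theta|}$, where $e(\mathbf F/\Theta)$ is the number of linear extensions of that poset: after fixing the $|F/\Theta|$-element image in $[n]$, each admissible bijection onto it corresponds to a linear extension, and there may be several when the poset is not a chain. The constant $e(\mathbf F/\Theta)$ does not depend on $n$, so the sum over partitions is still a polynomial in $n$ and your conclusion stands; but the count as written is incorrect for any $\mathbf F$ whose $S$-relation leaves incomparable pairs.
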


The class $\mathcal P$ of sequences $(\mathbf A_n)_{n\in\bbbn}$ that can be obtained by QF-interpretations of basic sequences is quite rich. In particular, it is closed under the following operations, which we know can be used to construct new strongly polynomial sequences from old ones:
\begin{itemize}
\item Extracting a subsequence $(\mathbf A_{P(n)})$, where $P$ is a polynomial such that $P(n)\in\mathbb{N}$ for $n\in\mathbb{N}$ (as $(\mathbf B_{P(n)})_{n\in\bbbn}$ is a basic sequence);
\item Applying a QF-interpretation scheme (as the composition of two QF-interpretation schemes defines a QF-interpretation scheme);
\item Strong sums (according to Lemma~\ref{lem:mergeInt});
\item  Multiplying by a polynomial $P$ such that $P(n)\in\mathbb{N}$ for $n\in\mathbb{N}$ (as $P(n)\,\mathbf A_n$ is a QF-interpretation of $\mathbf T_{P(n)}\oplus\mathbf A_n$).
\end{itemize}

\subsection{Concrete examples}\label{sec:ex}
\subsubsection{Crown graphs}
We start from the basic sequence $\mathbf A_n=\mathbf E\oplus\mathbf E\oplus \mathbf T_n$.
Consider the graphical interpretation scheme $I=(2,\iota,\rho)$, where
\begin{align*}
\iota(x_1,x_2):\quad&  U^T_1(x_1)\wedge\neg U^T_1(x_2)\\\rho(x_1,x_2,y_1,y_2): &\neg(x_1=y_1)\wedge \neg(U_1^E(x_2)\leftrightarrow U^E_1(y_2)).
\end{align*}
Then the graph obtained is the {\em crown graph} $S_n$ ($K_{n,n}$ minus a perfect matching), and it follows that $(S_n)_{n\in\bbbn}$ is a strongly polynomial sequence.
Similarly, for every integer $k$, the sequence $({\rm KG}_{n,k})_{n\in\bbbn}$ of the {\em Kneser graphs} is strongly polynomial.

\subsubsection{Generalized Johnson graphs}\label{sec:Johnson}
We start from the basic sequence $\mathbf A_n=\mathbf T_n$ and consider, for fixed integer $k$ and subset $D\subseteq [k]$, the graphical interpretation scheme $I=(k,\iota,\rho)$, where
\begin{align*}
\iota(x_1,\dots,x_k): &\bigwedge_{i=1}^{k-1}S_1(x_i,x_{i+1})\\
\rho(x_1,\dots,x_k,y_1,\dots,y_k): 
&\bigvee_{\substack{I,J\subseteq [k]\\ |I|=|J|\\
|I|\in D}}\biggl(
\bigwedge_{i\not\in I, j\not\in J}\neg(x_i=y_j)\,\wedge\,
\bigwedge_{i\in I}\bigvee_{j\in J} (x_i=y_j)\biggr)
\end{align*}
Then $I(\mathbf A_n)$ is the generalized Johnson graph $J_{n,k,D}$, which is the graph
with vertices $\binom{[n]}{k}$ and where $X$ and $Y$ are adjacent whenever $|X\cap Y|\in D$.
\medskip

\begin{remark}\label{rem:clique_intersect}
Similarly, let $k$ be an integer and let $D\subseteq [k]$, and let $(G_n)_{n\in\bbbn}$ be a
 strongly polynomial sequence of graphs.
For $n\in\bbbn$ define $H_n$ as the graph whose vertices are the $k$-cliques of $G_n$, where two $k$-cliques of $G_n$
are adjacent in $H_n$ if the cardinality of their intersection belongs to $D$. Then
the sequence of graphs $(H_n)_{n\in\bbbn}$ is strongly polynomial. In particular,
the sequence $(L(G_n))_{n\in\bbbn}$ of  line graphs is strongly polynomial.
\end{remark}

\subsubsection{Vertex-blowing of a fixed graph}
Let $F$ be a fixed graph with vertex set $[k]$. To each vertex $i$ of $F$ is associated a polynomial $P_i$ such that $P_i(n)\in\mathbb{N}$ for $n\in\mathbb{N}$. 
Let $\mathbf A_n=\bigoplus_{i=1}^k\mathbf{T}_{P_i(n)}$.
We define the graphical interpretation scheme
$I=(1,\iota,\rho)$ by
\begin{align*}
\iota(x):\quad&1\\
\rho(x,y):&\bigvee_{ij\in E(F)} U^T_i(x)\wedge U^T_j(y)
\end{align*}
Then $I(\mathbf A_n)$ is the vertex-blowing of $F$, in which vertex $i$ is replaced by $P_i(n)$ twin copies of $i$.

\subsubsection{Tree-blowing of a fixed rooted tree}
Let $F$ be a rooted tree with edge set $E=\{2,\dots,k\}$.
To each edge $e\in E$ is associated a polynomial $P_e$ such that $P_e(n)\in\mathbb{N}$ for $n\in\mathbb{N}$ and let $P_1$ be another such polynomial (for the root).
Let $\mathbf A_n=\bigoplus_{i=1}^k\mathbf T_{P_i(n)}$.
An $F$-path will be a sequence $(1,e_1,\dots,e_i)$ corresponding to a path from the root of $F$.
Define the graphical interpretation scheme $I=(k,\iota,\rho)$ by
\begin{align*}
\iota(x_1,\dots,x_k):\quad&
\bigvee_{\text{$F$-path }(a_1,\dots,a_t)}
\biggl(\bigwedge_{i=1}^t U^T_{a_i}(x_i)\,\wedge\,
\bigwedge_{i=t+1}^k (x_i=x_t)\biggr)\\
\rho(x_1,\dots,x_k,y_1,\dots,y_k):\ 
&\rho'(x_1,\dots,x_k,y_1,\dots,y_k)\vee
\rho'(y_1,\dots,y_k,x_1,\dots,x_k)
\intertext{where}
\rho'(x_1,\dots,x_k,y_1,\dots,y_k):\\
&\hspace{-1cm}\bigvee_{i=1}^{k-1}\Bigl(\bigwedge_{j=1}^i (x_j=y_j)\,\wedge(x_i=x_k)\wedge\neg(y_i=y_k)\wedge(y_{i+1}=y_k)\Bigr)
\end{align*}
Then $I(\mathbf A_n)$ is the tree-blowing of $F$ (in~\cite{GGN} this operation on rooted trees is called ``branching"). 

\subsubsection{Union of stars of orders $1,\dots,P(n)$}
We consider the graphical interpretation scheme $I=(2,\iota,\rho)$ defined by
\begin{align*}
\iota(x,y):&S_1(y,x)\\
\rho(x_1,y_1,x_2,y_2):&(y_1=y_2)\wedge
[(x_1=y_1)\wedge S_1(x_2,y_2)\,\vee\,
(x_2=y_2)\wedge S_1(x_1,y_1)]
\end{align*}
Then, for $\mathbf A_n=\mathbf T_{P(n)}$, we have
$$I(\mathbf{A_n})=\bigcup_{i=1}^{P(n)}\mathbf S_i,$$
where $\mathbf{S}_i$ is the star of order $i$. 

\subsubsection{Half graphs}
Let $\mathbf A_n=\mathbf{E}\oplus\mathbf E\oplus \mathbf T_n$.
Consider the graphical interpretation scheme $I=(2,\iota,\rho)$ where:
\begin{align*}
\iota(x_1,x_2):\quad&U^T_1(x_1)\wedge\neg U^T_1(x_2)\\
\rho(x_1,x_2,y_1,y_2): 
&S_1(x_1,y_1)\wedge U_1^E(x_2)\wedge U_2^E(y_2)\,\vee\,
S_1(y_1,x_1)\wedge U_1^E(y_2)\wedge U_2^E(x_2)
\end{align*}
The graph $I(\mathbf A_n)$  is the half graph on $2n$ vertices (see Fig.~\ref{fig:half}).
\begin{figure}[ht]
\centering
	\includegraphics[width=.5\textwidth]{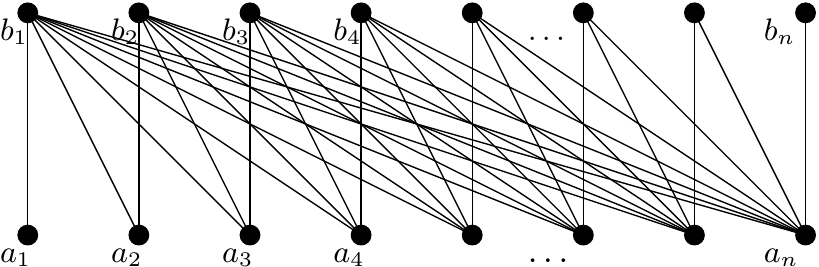}
\caption{Half graphs form a strongly polynomial sequence}\label{fig:half}
\end{figure}


\subsubsection{Intersection graphs of chords}

\begin{figure}[ht]
\centering
\begin{subfigure}{.5\textwidth}
  \centering
  \includegraphics[width=.65\linewidth]{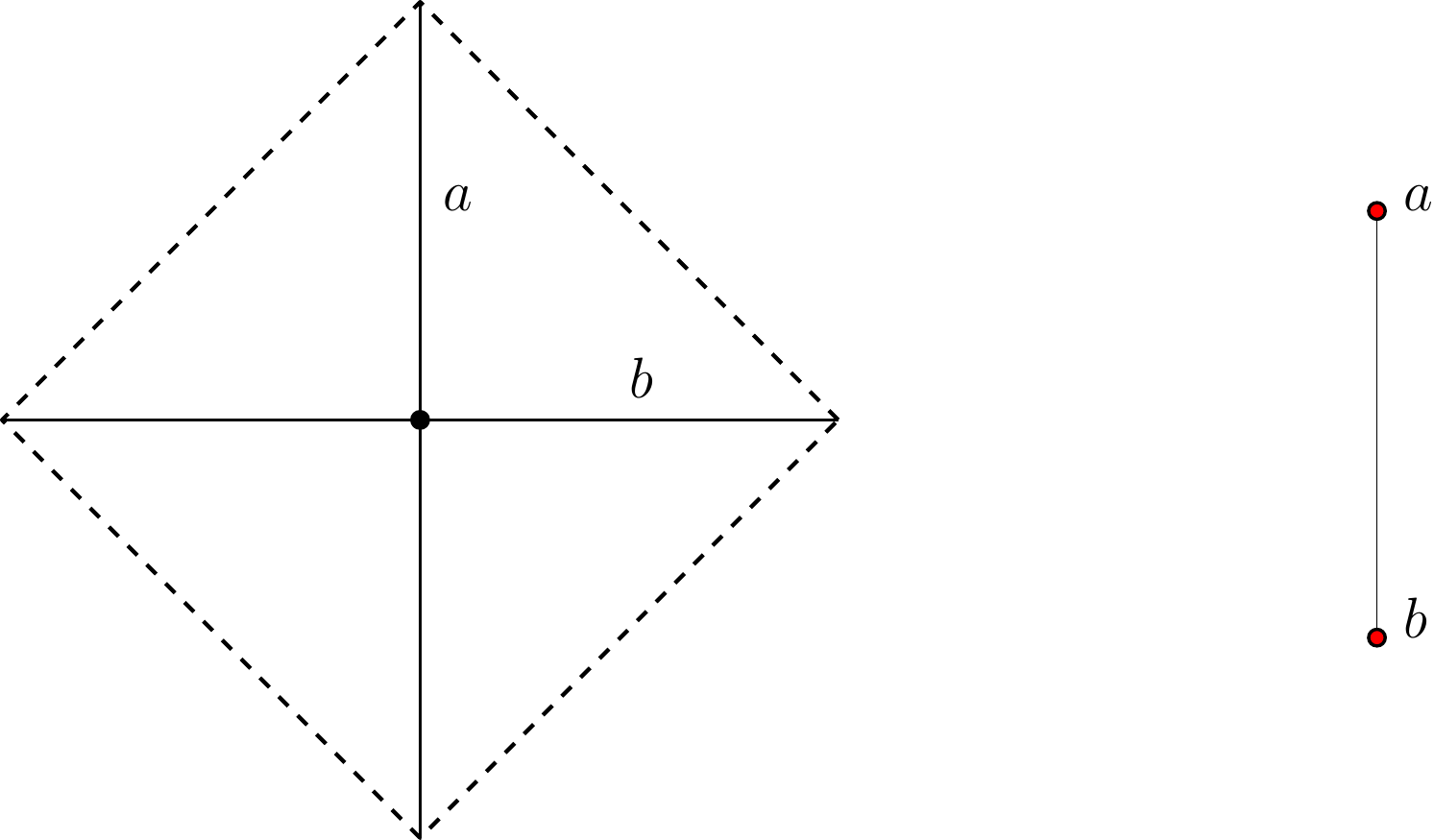}
  \caption{Square}
  \label{fig:D4}
\end{subfigure}%
\begin{subfigure}{.5\textwidth}
  \centering
  \includegraphics[width=.65\linewidth]{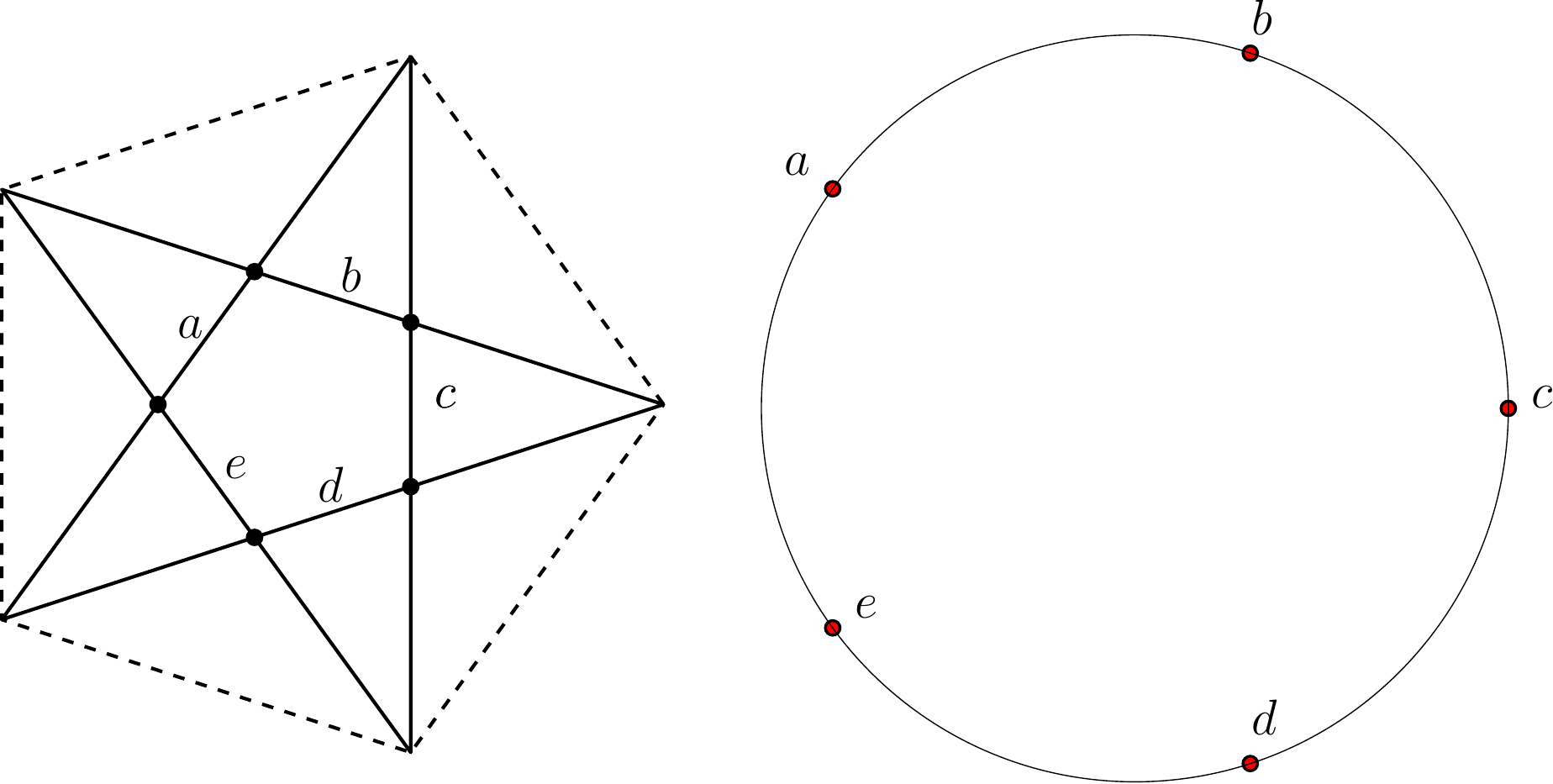}
  \caption{Pentagon}
  \label{fig:D5}
\end{subfigure}%
\vspace{3mm}

\centering\begin{subfigure}{.5\textwidth}
  \centering
  \includegraphics[width=.8\linewidth]{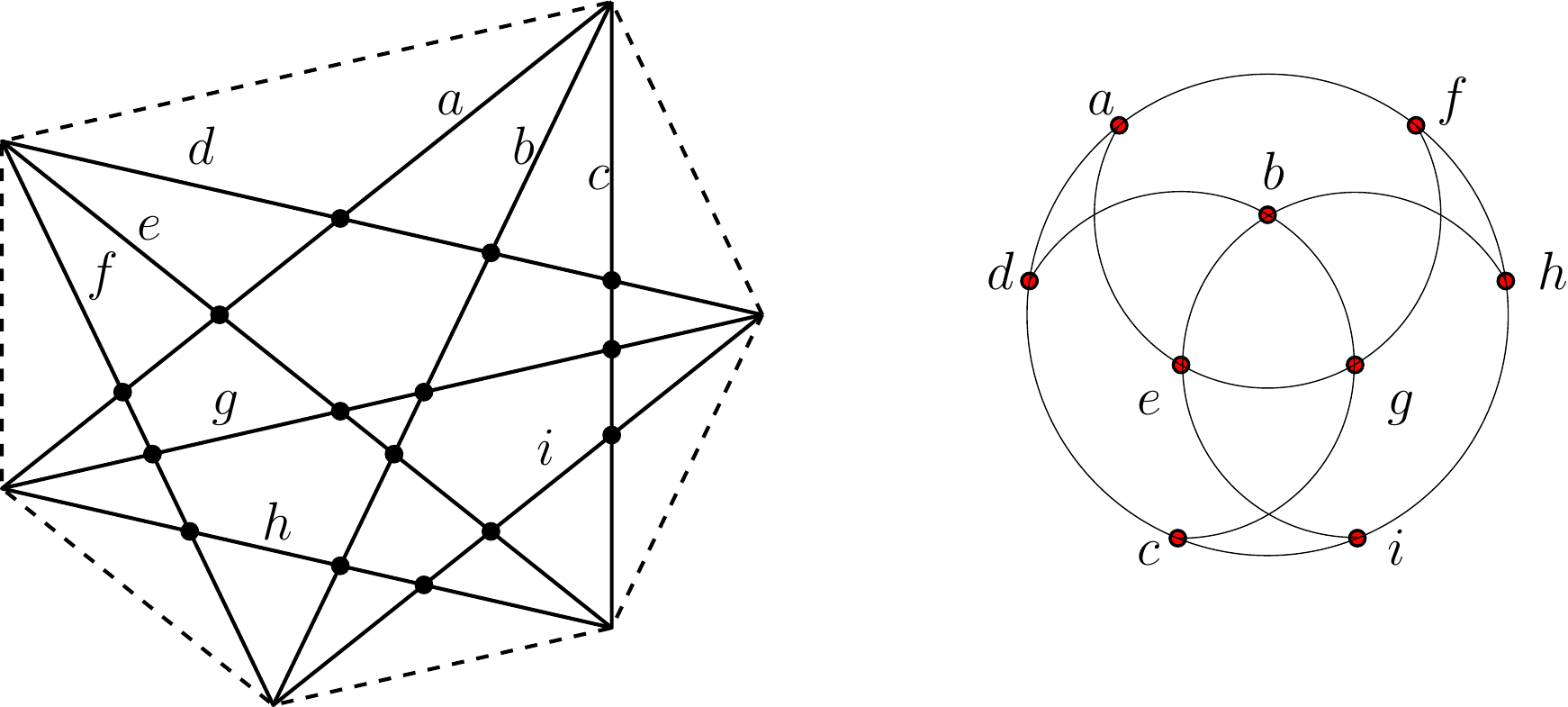}
  \caption{Hexagon}
  \label{fig:D6}
\end{subfigure}%
\begin{subfigure}{.5\textwidth}
  \centering
  \includegraphics[width=.8\linewidth]{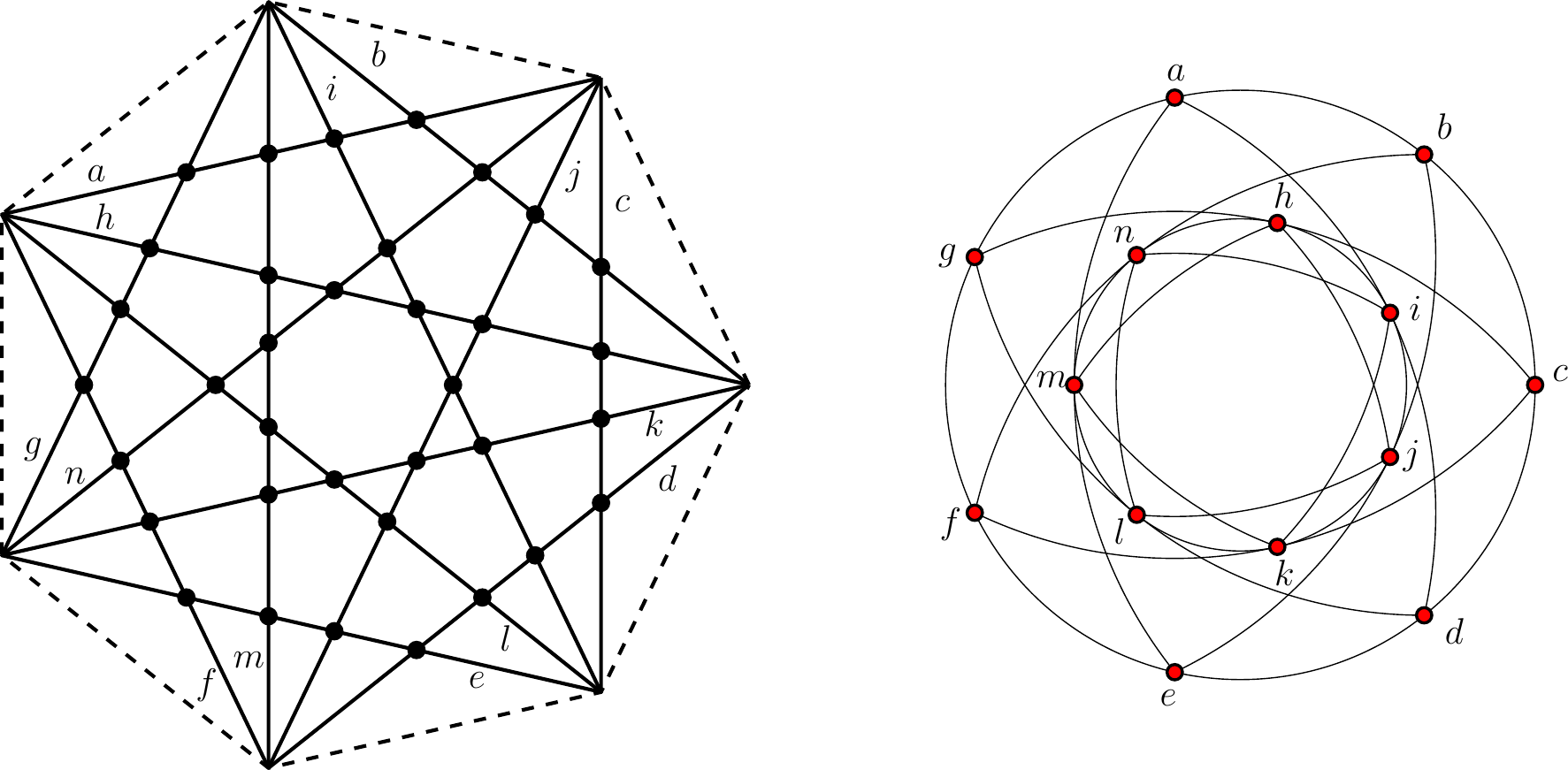}
  \caption{Heptagon}
  \label{fig:D7}
\end{subfigure}
\caption{Intersection graphs of chords of a convex $n$-gon form a strongly polynomial sequence}
\label{fig:chords}
\end{figure}

Let $\mathbf A_n=\mathbf T_n$.
Consider the graphical interpretation scheme $I=(2,\iota,\rho)$ where:
\begin{align*}
\iota(x_1,x_2):\quad&S_1(x_1,x_2)\\
\rho(x_1,x_2,y_1,y_2): 
&S_1(x_1,y_1)\wedge S_1(y_1,x_2)\wedge S_1(x_2,y_2)
\end{align*}
The graph $I(\mathbf A_n)$  is the intersection graph of chords of a convex $n$-gon (see Fig.~\ref{fig:chords}).

\subsection{Sequences of bounded degree graphs}
In this section we completely characterize strongly polynomial sequences of graphs of uniformly bounded degree.

\begin{theorem}
\label{thm:bounded}
Let $(\mathbf A_n)_{n\in\bbbn}$ be a sequence of graphs of uniformly bounded degree. Then the following conditions are equivalent:
\begin{enumerate}
\item  the sequence $(\mathbf A_n)_{n\in\bbbn}$ is strongly polynomial;
\item there is
a finite set $\{\mathbf F_1,\dots,\mathbf F_k\}$ of graphs and polynomials $P_1,\dots,P_k$ such that
$$\mathbf A_n=\sum_{i=1}^k P_i(n)\,\mathbf{F}_i;$$
\item the sequence $(\mathbf A_n)_{n\in\bbbn}$ is a
QF-interpretation of a basic sequence.
\end{enumerate}
\end{theorem}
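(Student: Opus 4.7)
The plan is to prove three implications. $(3) \Rightarrow (1)$ is immediate from Corollary~\ref{cor:intBasic}, and $(2) \Rightarrow (3)$ is an explicit construction: given $\mathbf A_n = \sum_{i=1}^k P_i(n)\,\mathbf F_i$ with $m_i = |V(\mathbf F_i)|$, I realise each summand $P_i(n)\,\mathbf F_i$ as a QF-interpretation of exponent $2$ applied to the basic sequence $\mathbf T_{P_i(n)} \oplus \mathbf E \oplus \cdots \oplus \mathbf E$ (one tournament plus $m_i$ marked singletons, one per vertex of $\mathbf F_i$). Here a vertex of the interpretation is a pair (tournament vertex, singleton), and two pairs are adjacent exactly when their tournament parts coincide and their singleton parts correspond to an edge of $\mathbf F_i$. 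Constant $P_i$ are handled in the same spirit using singletons alone. The separate sequences are combined using Lemma~\ref{lem:mergeInt} and a final Merge QF-interpretation that fuses the several edge relations into a single graph edge relation.

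The main content is $(1) \Rightarrow (2)$. Assume $(\mathbf A_n)$ is strongly polynomial of degree bounded by $d$. For each connected graph $T$, let $c_T(n)$ denote the number of components of $\mathbf A_n$ isomorphic to $T$. I first show each $c_T$ is a polynomial. An induced copy of $T$ in $\mathbf A_n$ is an isolated component iff no vertex outside its image is adjacent to it, and in bounded degree such a copy has at most $d|V(T)|$ potential outside neighbours. Inclusion--exclusion over these outside neighbours expresses $|{\rm Aut}(T)|\,c_T(n)$ as a finite $\mathbb Z$-linear combination of ${\rm ind}(T', \mathbf A_n)$ for finitely many bounded-size extensions $T'$ of $T$; each ${\rm ind}(T', \mathbf A_n)$ is polynomial by Theorem~\ref{thm:equiv}, hence so is $c_T$.

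The crux is then to show that only finitely many $T$ have $c_T \not\equiv 0$, after which $(2)$ is immediate by taking those $T$ as the $\mathbf F_i$ and the corresponding $c_{T_i}$ as the polynomials $P_i$. Write $P(n) = |V(\mathbf A_n)|$, itself a polynomial of some degree $D$, and use the pointwise identity
$$
P(n) = \sum_T |V(T)|\,c_T(n),
$$
in which for each $n$ only finitely many terms are nonzero. Since $c_T(n) \leq P(n)$, every $c_T$ has degree at most $D$. Expand each $c_T$ in the integer-valued basis $\binom{n}{j}$, writing $c_T(n) = \sum_{j=0}^{D} b_j^T \binom{n}{j}$ with $b_j^T \in \mathbb Z$; because $c_T \geq 0$ on $\mathbb N$, the leading coefficient $b_{\deg c_T}^T$ is a strictly positive integer. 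Sort the $T$ with $c_T \not\equiv 0$ into classes $J_d = \{T : \deg c_T = d\}$ for $d = 0, \ldots, D$, and induct downwards from $d = D$. At step $d$, inspect the $\binom{n}{d}$-coefficient of $P(n)$: subtracting the contributions from $T \in J_{d+1} \cup \cdots \cup J_D$ (a finite sum of integers already controlled by the induction) leaves $\sum_{T \in J_d} |V(T)|\,b_d^T$, a sum of strictly positive integers, which bounds $|J_d|$. Summing the bounds over $d$ yields finiteness of $\bigcup_d J_d$.

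I expect the main obstacle to be this last bookkeeping. The argument rests on the observation that a non-negative integer-valued polynomial has a strictly positive leading coefficient in the binomial basis, and this is precisely what prevents cancellation within each degree class $J_d$ and lets the finite bounds propagate. The bounded-degree hypothesis enters essentially only once, in the inclusion--exclusion reduction that makes $c_T$ polynomial; everything after that is algebraic accounting.
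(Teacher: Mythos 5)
Your overall structure is right and two of the three implications match the paper's: $(3)\Rightarrow(1)$ via Corollary~\ref{cor:intBasic}, and $(2)\Rightarrow(3)$ by exhibiting a concrete QF-interpretation (the paper just says ``obviously''; your construction is fine, modulo the small point that when some $P_i$ is constant you cannot use $\mathbf T_{P_i(n)}$ in a basic sequence because the defining polynomials must be non-constant, so those summands have to be realised with marked singletons alone, as you note). For $(1)\Rightarrow(2)$ your strategy is genuinely different from the paper's: you introduce the per-type component count $c_T(n)$ and prove it is polynomial by inclusion--exclusion over bounded sets of outside neighbours. That step is correct and is a clean alternative to the paper's argument, which instead reasons about the polynomials $P_{\mathbf F}$ counting induced copies of \emph{connected} $\mathbf F$, and then inductively peels off a maximal connected type $\mathbf F_k$ from $\mathbf A_n$ to obtain the decomposition directly, never isolating $c_T$ as such.

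The gap is in the finiteness step. You write $P(n)=\sum_T |V(T)|\,c_T(n)$, correctly flag it as only a \emph{pointwise} identity with a support of $T$'s depending on $n$, and then nonetheless extract the $\binom{n}{d}$-coefficient of $P$ and equate it to $\sum_T |V(T)|\,b_d^T$, subtracting contributions from $J_{>d}$ to leave $\sum_{T\in J_d}|V(T)|\,b_d^T$. That manipulation requires the identity to hold as an identity of polynomials --- i.e.\ a \emph{finite} sum on the right --- which is precisely what is to be proven. The downward induction is therefore circular: already at step $d=D-1$ you need to know that $P-\sum_{T\in J_D}|V(T)|\,c_T$ has degree $<D$, and that is only clear once you know the full sum is finite. (The $d=D$ step can be salvaged by noting that for any finite $J'\subseteq J_D$ the polynomial $P-\sum_{T\in J'}|V(T)|\,c_T$ is nonnegative on $\mathbb N$ and hence has nonnegative leading binomial coefficient, giving $|J'|\leq a_D$; but the descent still stalls.) The missing ingredient is exactly the paper's key observation: since each $c_T$ is an integer-valued polynomial of degree at most $D=\deg P$, nonnegative on $\mathbb N$ and not identically zero, it has at most $D$ roots and so $c_T(n)\geq 1$ for some $n\in\{0,\dots,D\}$; thus every type $T$ with $c_T\not\equiv 0$ already occurs as a component of one of the finitely many graphs $\mathbf A_0,\dots,\mathbf A_D$, whence there are only finitely many such $T$. (Equivalently, $\sum_T\sum_{n=0}^D c_T(n)\leq\sum_{n=0}^D P(n)<\infty$ and each nonzero $c_T$ contributes at least $1$.) With finiteness established, your polynomial identity becomes legitimate and the binomial bookkeeping is no longer needed --- you can simply set $\mathbf F_i$ to be the finitely many types and $P_i=c_{\mathbf F_i}$, which is arguably tidier than the paper's peeling induction.
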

\begin{proof}
Assume that the sequence $(\mathbf A_n)_{n\in\bbbn}$ is strongly polynomial. Let $P(n)=|A_n|$, $d=\deg P$ and, for a graph $\mathbf{F}$, let $P_{\mathbf{F}}(n)$ be the number of copies of $\mathbf F$ in $\mathbf A_n$. As $\Delta(\mathbf A_n)\leq D$ for some fixed bound $D$, we have $P_{\mathbf{F}}(n)\leq D^{|F|-1}\,P(n)$ and so all the polynomials
$P_{\mathbf{F}}$ have degree at most $d$. It follows
that $P_{\mathbf{F}}\neq 0$ if and only if there exists $i\leq d+1$ such that $P_{\mathbf{F}}(i)\neq 0$, that is, if and only if
$\mathbf{F}$ is an induced subgraph of $\bigcup_{i=1}^{d+1}\mathbf A_i$. Let $\mathbf F_1,\dots,\mathbf F_k$ be the connected induced subgraphs of $\bigcup_{i=1}^{d+1}\mathbf A_i$. As every connected component of $\mathbf A_n$ belongs to $\{\mathbf F_1,\dots,\mathbf F_k\}$, we infer that there exist polynomials $P_1,\dots,P_k$ such that
$\mathbf A_n=\sum_{i=1}^k P_i(n)\,\mathbf{F}_i$, as can be proved by induction on $k$ as follows. Let $\mathbf F$ be a connected maximal induced subgraph of $\bigcup_{i=1}^{d+1}\mathbf A_i$. Without loss of generality, we can assume $\mathbf F=\mathbf F_k$.
Let $P_k=P_{\mathbf{F}_k}$.
Then $\mathbf A_n$ contains
$P_{k}(n)$ copies of $\mathbf F_k$, each of them being a connected component of $\mathbf A_n$ (by maximality of $\mathbf{F}_k$). Hence we can define the sequence $(\mathbf{B}_n)_{n\in\bbbn}$ by requiring that 
$\mathbf{A}_n=\mathbf{B}_n+P_{k}(n)\,\mathbf F_k$.
The sequence $(\mathbf{B}_n)_{n\in\bbbn}$ is obviously strongly polynomial. Moreover, the connected induced subgraphs  of $\bigcup_{i=1}^{d+1}\mathbf B_i$ form a proper subset of the set of the connected induced subgraphs of $\bigcup_{i=1}^{d+1}\mathbf A_i$.
Without loss of generality, these induced subgraphs are $\mathbf F_1,\dots,\mathbf F_\ell$ (for some $\ell<k$). Hence, by induction hypothesis, there are polynomials $P_1,\dots,P_\ell$ such that
$\mathbf B_n=\sum_{i=1}^\ell P_i(n)\,\mathbf{F}_{i}$. Thus
$\mathbf A_n=\sum_{i=1}^\ell P_i(n)\,\mathbf{F}_{i}\,+P_{k}(n)\,\mathbf F_k$.

Assume that there is
a finite set $\{\mathbf F_1,\dots,\mathbf F_k\}$ of graphs and polynomials $P_1,\dots,P_k$ such that
$\mathbf A_n=\sum_{i=1}^k P_i(n)\,\mathbf{F}_i$.
Then the sequence $(\mathbf A_n)_{n\in\bbbn}$ is 
obviously a
QF-interpretation of a basic sequence.

Assume that the sequence $(\mathbf A_n)_{n\in\bbbn}$ is  a
QF-interpretation of a basic sequence. Then by Corollary~\ref{cor:intBasic} it is strongly polynomial, .
\end{proof}

\section{Left limits of strongly polynomial sequences}

Lov\'asz and Szegedy \cite{LS10} 
define a graph property (or equivalently a class of graphs) $\mathcal C$ 
to be {\em random-free} if every left limit of graphs in $\mathcal C$ is random-free. They prove the following:
\begin{theorem}[Lov\'asz and Szegedy \cite{LS10}]
\label{thm:RFLS}
A hereditary class $\mathcal C$ is random-free if and only if there exists a bipartite graph $F$ with bipartition
$(V_1,V_2)$ such that no graph obtained from $F$ by adding edges within $V_1$ or within $V_2$ is in $\mathcal C$.
\end{theorem}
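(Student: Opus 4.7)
My approach rests on the standard characterization that a graphon $W:[0,1]^2\to[0,1]$ is random-free if and only if $W(x,y)\in\{0,1\}$ for almost every $(x,y)$; consequently a hereditary class $\mathcal C$ is random-free precisely when every left-limit graphon of graphs in $\mathcal C$ is almost everywhere $\{0,1\}$-valued. I would treat the two implications separately, using different tools.

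For the implication $(\Leftarrow)$, I would assume such a bipartite $F=(V_1,V_2;E_F)$ exists and, for contradiction, that some sequence $(G_n)\subseteq\mathcal C$ converges to a graphon $W$ with $0<W(x,y)<1$ on a set of positive measure. Applying Lebesgue's density theorem I would locate $\varepsilon>0$ and positive-measure sets $X,Y\subseteq[0,1]$ with $\varepsilon\le W\le 1-\varepsilon$ almost everywhere on $X\times Y$. Sampling $|V_1|$ points uniformly from $X$ and $|V_2|$ uniformly from $Y$, the cross-edges are independent Bernoulli variables with parameters in $[\varepsilon,1-\varepsilon]$, so the bipartite pattern $F$ arises with positive probability. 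The technical heart of this direction is to realize simultaneously every pattern of intra-$V_1$ and intra-$V_2$ edges with positive probability; I would iterate the Lebesgue-density argument inside $X$ and inside $Y$ to obtain finer positive-measure sub-rectangles on which $W$ retains values in $(0,1)$, making each intra-part edge draw an independent Bernoulli with parameter bounded away from $\{0,1\}$. Left convergence then forces every augmentation of $F$ by internal edges to appear as an induced subgraph of $G_n$ for all sufficiently large $n$, contradicting the hypothesis on $F$ together with the hereditariness of $\mathcal C$.

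For the implication $(\Rightarrow)$, I would argue contrapositively: assume that for every bipartite $F$ at least one augmentation of $F$ by internal edges lies in $\mathcal C$, and construct a non-$\{0,1\}$-valued graphon as a left limit of graphs in $\mathcal C$. The plan is to exploit the abundance of bipartite patterns: for each $n$, take $F_n$ to be a bipartite graph on $n+n$ vertices with a prescribed ``random-like'' cross-edge structure (e.g.\ a generic bipartite graph of edge density $1/2$), invoke the hypothesis to pick an augmentation $G_n\in\mathcal C$, and extract a convergent subsequence by compactness of the graphon space. Because the cross-density of $G_n$ is roughly $1/2$ and the cross-pattern exhibits a bipartite subgraph statistics matching the Erd\H{o}s--R\'enyi graphon $W_0\equiv 1/2$, the limit $W$ must take values in $(0,1)$ on the cross block on a positive-measure region (no $\{0,1\}$-valued graphon can reproduce those statistics). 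Hence $W$ is not random-free, contradicting the assumed random-freeness of $\mathcal C$.

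The main obstacle is the $(\Leftarrow)$ direction at the ``iterate Lebesgue density'' step: one must ensure that failure of random-freeness of $W$ propagates to \emph{every} internal edge configuration simultaneously, not merely some. Establishing this requires the structural fact that non-random-freeness of a graphon on a product region descends to positive-measure sub-rectangles, and arranging it for all $O(|V_1|^2+|V_2|^2)$ intra-part edges at once is where the Lov\'asz--Szegedy machinery does its heavy lifting. The $(\Rightarrow)$ direction carries a different subtlety: a single augmentation per $F$ is a weak hypothesis, and converting bipartite-pattern abundance into a truly fractional-density limit typically needs a careful Ramsey- or regularity-based saturation argument to bridge between bipartite witnesses and global graphon statistics.
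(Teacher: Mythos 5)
This theorem is quoted in the paper as a result of Lov\'asz and Szegedy~\cite{LS10}; the paper contains no proof of it, so there is no internal argument to compare against. Evaluating your sketch on its own merits: the $(\Leftarrow)$ direction is salvageable but your ``technical heart'' is a false difficulty, and the $(\Rightarrow)$ direction has a genuine gap at its central step.

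For $(\Leftarrow)$, you do not need to realize every intra-part edge pattern with positive probability, and your proposed fix --- iterating Lebesgue density inside $X$ and inside $Y$ to keep $W$ fractional on $X\times X$ and $Y\times Y$ --- would not work, since nothing prevents $W$ from being $\{0,1\}$-valued on those blocks. You only need \emph{one} augmentation of $F$ to land in $\mathcal C$. Once $W\in[\varepsilon,1-\varepsilon]$ a.e.\ on $X\times Y$, sampling $|V_1|$ points from $X$ and $|V_2|$ from $Y$ yields cross-pattern exactly $F$ with probability at least $\varepsilon^{|V_1||V_2|}$, and whatever internal edges come along give \emph{some} augmentation $F'$ of $F$; since there are only finitely many augmentations, at least one has positive $W$-sampling density, hence appears as an induced subgraph of $G_n$ for large $n$, hence $F'\in\mathcal C$ by hereditariness. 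Contradiction; no iteration needed.

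For $(\Rightarrow)$, the claim ``no $\{0,1\}$-valued graphon can reproduce those statistics'' is asserted, not proved, and it is precisely the nontrivial content of the theorem. Two concrete obstacles: (a) the unordered graphon limit $W$ of your augmentations $G_n$ does not carry the bipartition of $F_n$, so ``the cross block of $W$'' is not defined until you set up a limit in the space of two-coloured structures; (b) even with the bipartition tracked, it is not elementary that a measurable $\{0,1\}$-valued kernel cannot match the bi-induced densities of the constant-$\frac12$ kernel --- such kernels can have, for instance, infinite VC dimension, so crude shattering or counting bounds do not immediately separate them from truly fractional kernels. This is where Lov\'asz and Szegedy invoke their covering-number/entropy characterization of random-free graphons, which your sketch replaces with the placeholder ``a careful Ramsey- or regularity-based saturation argument.'' That placeholder is where the proof of the hard direction actually lives, and it is absent from your proposal.
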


This theorem has, in our setting, the following corollary, which gives a necessary condition for a sequence of graphs to be strongly polynomial.
\begin{theorem}\label{thm:random_free}
\label{thm:rf}
Every strongly polynomial sequence of graphs $(G_n)_{n\in\bbbn}$ converges to a random-free graphon.
\end{theorem}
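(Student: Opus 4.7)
The plan is to apply the Lov\'asz--Szegedy theorem (Theorem~\ref{thm:RFLS}) to the hereditary closure $\mathcal{C}$ of the sequence, namely the class of finite graphs $H$ appearing as an induced subgraph of some $G_n$. By Theorem~\ref{thm:equiv} this is the same as the class of $H$ for which the polynomial $P_H(n):={\rm ind}(H,G_n)$ is not identically zero. Convergence of $(G_n)$ in the left-limit sense is automatic from strong polynomiality: for every graph $F$ the density $t(F,G_n)={\rm hom}(F,G_n)/|V(G_n)|^{|V(F)|}$ is a ratio of polynomials in $n$, hence tends to a limit; write $W$ for the limit graphon.

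The main quantitative input is a bound on the speed of $\mathcal{C}$. Let $d$ be the degree of the polynomial $n\mapsto|V(G_n)|$ and fix $k\geq 1$. For any $k$-vertex $H$, $P_H$ is a polynomial of degree at most $kd$; if $P_H\not\equiv 0$ then $P_H$ cannot vanish on the whole set $\{1,\dots,kd+1\}$, so every $k$-vertex $H\in\mathcal{C}$ appears as an induced subgraph of some $G_n$ with $n\leq kd+1$, and hence of the disjoint union $\bigsqcup_{n=1}^{kd+1}G_n$, a graph on $M=O(k^{d+1})$ vertices. This yields $s_k(\mathcal{C}):=|\{H\in\mathcal{C}:|V(H)|=k\}|\leq\binom{M}{k}=2^{O(k\log k)}$, and in particular $s_k(\mathcal{C})=2^{o(k^2)}$.

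To exhibit a bipartite $F$ satisfying the hypothesis of Theorem~\ref{thm:RFLS} I would use an averaging argument. Fix disjoint labelled sets $V_1,V_2$ with $|V_1|=|V_2|=k$; each of the $2^{\binom{2k}{2}}$ labelled graphs on $V_1\cup V_2$ decomposes uniquely as $F\cup E_1\cup E_2$ where $F$ is the set of edges between $V_1$ and $V_2$ and $E_i\subseteq\binom{V_i}{2}$. So labelled pairs $(F,A)$ in which $A$ is an augmentation of $F$ whose isomorphism class belongs to $\mathcal{C}$ are in bijection with labelled graphs on $V_1\cup V_2$ whose isomorphism class belongs to $\mathcal{C}$, giving the upper bound $s_{2k}(\mathcal{C})\cdot(2k)!$. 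Since there are $2^{k^2}$ labelled bipartite $F$, the average number of augmentations of a given $F$ lying in $\mathcal{C}$ is at most $s_{2k}(\mathcal{C})(2k)!/2^{k^2}=2^{O(k\log k)-k^2}$, which tends to $0$. For $k$ large enough, some bipartite $F$ therefore has no augmentation whose isomorphism class lies in $\mathcal{C}$.

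Applying Theorem~\ref{thm:RFLS} to this $F$ shows that $\mathcal{C}$ is random-free, so every left-limit of graphs in $\mathcal{C}$ is a random-free graphon; in particular $W$ is. I expect the main technical step to be the speed bound, whose essential observation is that a nonzero polynomial of degree at most $kd$ must be nonzero at some point of $\{1,\dots,kd+1\}$; the averaging argument and the appeal to Theorem~\ref{thm:RFLS} are then routine.
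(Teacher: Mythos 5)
Your proof is correct and shares the paper's overall structure: (a) establish convergence by noting that homomorphism densities are rational in $n$; (b) bound the speed $s_k(\mathcal C)$ of the hereditary class $\mathcal C$ of induced subgraphs of the $G_n$, using the fact that a nonzero polynomial of degree at most $kd$ cannot vanish on all of $\{1,\dots,kd+1\}$; (c) feed this speed bound into Theorem~\ref{thm:RFLS}. Where you differ is in step (c). The paper constructs, for each $k$, a specific twin-free bipartite graph $F_k$ with $|V_1|=k$ and $|V_2|=2^k$ (vertices in $V_2$ have pairwise distinct neighbourhoods in $V_1$) and shows that any augmentation $F'$ of $F_k$ contains at least $\binom{2^k}{k}/(k!\binom{2k}{k})=2^{k^2(1-o(1))}$ pairwise non-isomorphic induced subgraphs of order $2k$; hence no augmentation of $F_k$ can lie in $\mathcal C$ once $k$ is large, and Theorem~\ref{thm:RFLS} applies to $F_k$. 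Your averaging argument instead considers all $2^{k^2}$ labelled bipartite graphs on fixed parts of size $k$, observes that the labelled pairs $(F,A)$ with $A$ an augmentation of $F$ whose isomorphism class lies in $\mathcal C$ are exactly the labelled members of $\mathcal C$ on that ground set, and concludes by pigeonhole that the average number of $\mathcal C$-augmentations per $F$ is $s_{2k}(\mathcal C)(2k)!/2^{k^2}\to 0$, so some $F$ has none. This reaches the same lower bound ``not random-free implies speed $\geq 2^{k^2(1-o(1))}$'' but avoids both the explicit twin-free construction and the slightly delicate count of distinct isomorphism types inside a single augmentation; the paper's argument, in return, produces an explicit witness $F_k$ independent of $\mathcal C$, which is sometimes a useful extra. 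Both are valid; yours is arguably the cleaner derivation of that one step.
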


\begin{proof}
Consider 
a strongly polynomial sequence of graphs $(G_n)_{n\in\bbbn}$. Let $P(n)=|G_n|$ and $d=\deg P$.
For every graph $F$, the probability that a random map
from $F$ to $G_n$ is a homomorphism is a fixed rational function of $n$, hence converges as $n\rightarrow\infty$.
It follows that the sequence $(G_n)_{n\in\bbbn}$ converges to some graphon $W$.

For $k\in\mathbb{N}$, consider the bipartite graph $F_k=(V_1,V_2,E)$, where
$|V_1|=k$, $|V_2|=2^{k}$, and the neighbourhoods of vertices in $V_2$ are pairwise distinct. 
 Let $F'$ be any graph obtained from $F_k$ by adding some edges whose endpoints both belong to $V_1$ or both to $V_2$.  
There are $\binom{2^k}{k}$ ways to choose $k$ vertices from $V_2$, which together with the $k$ vertices of $V_1$ induce a subgraph of $F'$ of order $2k$, which is unique up to the choice of the ordered part of $k$ vertices corresponding to $V_1$. 
Hence 
there are at least
$\binom{2^k}{k}/(k!\binom{2k}{k})=2^{k^2(1-o(1))}$ distinct induced subgraphs of $F'$ of order $2k$.
Thus, if a hereditary class $\mathcal C$ of graphs is not random-free, there exists for every integer $k$, according to Theorem~\ref{thm:RFLS}, a graph $F'$ derived from $F_k$ that belongs to $\mathcal C$. Hence the number of graphs of order $2k$ in $\mathcal C$ is at least $2^{k^2(1-o(1))}$.

To the sequence $(G_n)$ corresponds a hereditary class $\mathcal{F}=\{F:\exists n\; F\subseteq_i G_n\}$, consisting of graphs $F$ that occur as an induced subgraph of some $G_n$. 
If a graph $F$ of order $k$ belongs to $\mathcal F$, then it is an induced subgraph of a graph $G_n$ with $n\leq kd+1$. Indeed, the degree of the polynomial $P_F$ counting $F$ is at most $kd$, hence if
$P_F(n)=0$ for every $n\leq kd+1$, then $P_F=0$. 
It follows that the number of induced subgraphs of order $k$ is bounded by 
$\sum_{i=1}^{kd+1}\binom{P(i)}{k}=2^{o(k^2)}$.
It follows that every strongly polynomial sequence 
converges to a random-free graphon.
\end{proof}

The converse of the implication stated by Theorem~\ref{thm:rf} does not hold in general. Indeed, the set of strongly polynomial sequences is not closed under the operation of subsequence extraction, while the set of sequences converging to a random-free graphon does have this property.

\section{Going further}
\label{sec:Further}
We have seen that QF-interpretations of basic sequences form strongly polynomial sequences. We now extend this construction to generalized basic sequences, as a way to generate new strongly polynomial sequences from old.

\begin{definition}
Let $(\mathbf A_n)_{n\in\bbbn}$ be a sequence of $\lambda$-structures. Let $\lambda^+$ be the signature obtained from $\lambda$ by adding a new binary relation $S$ and a unary relation symbol $U$.

For $n\in\bbbn$, the  $\lambda^+$-structure
$\mathbf T \langle\mathbf A\rangle_n$ is obtained from the disjoint union
$\sum_{i=1}^n\mathbf A_n$ by adding relations $S$ and $U$ as follows:
\begin{itemize}
\item for every vertex $x$, $\mathbf T \langle\mathbf A\rangle_n\models U(x)$;
\item for all vertices $x\in A_i$ and $y\in A_j$, 
 $\mathbf T \langle\mathbf A\rangle_n\models S(x,y)$ if $i<j$.
\end{itemize}
\end{definition}

\begin{lemma}
Let $(\mathbf A_n)_{n\in\bbbn}$ be a strongly polynomial sequence of $\lambda$-structures. Let $\lambda^+$ be the signature obtained from $\lambda$ by adding a new binary relation $S$ and a unary relation symbol $U$.

Then $(\mathbf T \langle\mathbf A\rangle_n)_{n\in\bbbn}$ is a strongly polynomial sequence of $\lambda^+$-structures.
\end{lemma}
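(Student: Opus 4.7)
By Theorem~\ref{thm:equiv}, it suffices to verify that for every $\lambda^+$-structure $\mathbf F$ the function $n\mapsto \operatorname{hom}(\mathbf F,\mathbf T\langle\mathbf A\rangle_n)$ is a polynomial in $n$. My plan is to decompose such a homomorphism $f:\mathbf F\to\mathbf T\langle\mathbf A\rangle_n$ according to which of the $n$ copies of $\mathbf A_n$ each vertex of $\mathbf F$ is sent to, and then to count the contributions of each combinatorial type separately.

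Concretely, for a homomorphism $f$ let $c(v)\in[n]$ be the index of the copy containing $f(v)$. Since the only $S$-edges of $\mathbf T\langle\mathbf A\rangle_n$ run strictly from an earlier copy to a later copy, since the relations of $\lambda$ hold only within a single copy, and since $U$ is satisfied by every vertex of the target, the function $c$ admits an extension to some homomorphism $f$ if and only if: (a) no $\lambda$-relation tuple of $\mathbf F$ has its entries split between distinct fibers of $c$, and (b) every $S$-edge $S(u,v)$ of $\mathbf F$ satisfies $c(u)<c(v)$ (so that $u$ and $v$ lie in distinct fibers). I would therefore group the fibers of $c$ in increasing order of their index to obtain an \emph{ordered partition} $(V_1,\ldots,V_m)$ of $V(F)$ that is \emph{compatible} in the sense of (a) and (b); there are only finitely many compatible ordered partitions, depending only on $\mathbf F$.

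Given a compatible ordered partition $(V_1,\ldots,V_m)$, the homomorphisms inducing it are parametrised by choosing copy indices $1\le c_1<c_2<\cdots<c_m\le n$, which contributes $\binom{n}{m}$, together with an independent homomorphism into $\mathbf A_n$ of the $\lambda$-reduct of $\mathbf F[V_i]$ for each $i$ (the relation $S$ is empty on $V_i$ by compatibility and $U$ is trivially preserved). This yields the formula
\[
\operatorname{hom}(\mathbf F,\mathbf T\langle\mathbf A\rangle_n)
=\sum_{(V_1,\ldots,V_m)}\binom{n}{m}\prod_{i=1}^m\operatorname{hom}\bigl(\mathbf F[V_i]_{\lambda},\mathbf A_n\bigr),
\]
where the sum ranges over compatible ordered partitions of $V(F)$ and $\mathbf F[V_i]_{\lambda}$ denotes the restriction of $\mathbf F$ to $V_i$ with $S$ and $U$ forgotten. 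By the strong polynomiality of $(\mathbf A_n)_{n\in\bbbn}$ and Theorem~\ref{thm:equiv}(iii), each factor on the right is a polynomial in $n$; $\binom{n}{m}$ is polynomial in $n$; and the outer sum is finite. Hence the left-hand side is a polynomial in $n$, which is what was required.

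The main step to carry out carefully is the bookkeeping in the middle paragraph: establishing that a homomorphism $f$ is uniquely encoded by the data of a compatible ordered partition, an increasing sequence $c_1<\cdots<c_m$, and the internal homomorphisms on each $V_i$, and in particular that conditions (a)--(b) are precisely what is needed. Once this bijective correspondence is pinned down, the polynomiality conclusion is immediate from the closure of polynomial-valued sequences under finite sums and products.
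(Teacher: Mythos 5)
Your overall strategy of fibering a homomorphism over the index of the copy into which each vertex lands, and then grouping by ordered compatible partitions of $V(F)$, is sound and mirrors the paper's decomposition (the paper runs the argument for $\operatorname{ind}$ rather than $\operatorname{hom}$, but both suffice by Theorem~\ref{thm:equiv}). The trouble is in the step where you factor out $\binom{n}{m}$.

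Your formula
\[
\operatorname{hom}(\mathbf F,\mathbf T\langle\mathbf A\rangle_n)
=\sum_{(V_1,\ldots,V_m)}\binom{n}{m}\prod_{i=1}^m\operatorname{hom}\bigl(\mathbf F[V_i]_{\lambda},\mathbf A_n\bigr)
\]
tacitly assumes that every copy inside $\mathbf T\langle\mathbf A\rangle_n$ is the \emph{same} structure $\mathbf A_n$, so that the inner product is independent of the choice of indices $c_1<\cdots<c_m$. That reading is what the text ``$\sum_{i=1}^n\mathbf A_n$'' literally says, but the paper's own proof makes clear it is a typo for $\sum_{i=1}^n\mathbf A_i$: the $i$-th block is $\mathbf A_i$, not $\mathbf A_n$. (This is exactly why the paper's formula carries a factor $\operatorname{inj}(\mathbf F_j,\mathbf A_{i_j})$ rather than $\operatorname{inj}(\mathbf F_j,\mathbf A_n)$, and why the subsequent lemma about nested sums is needed at all.) Under the intended construction the correct identity is
\[
\operatorname{hom}(\mathbf F,\mathbf T\langle\mathbf A\rangle_n)
=\sum_{(V_1,\ldots,V_m)}\ \sum_{1\le c_1<\cdots<c_m\le n}\ \prod_{i=1}^m
\operatorname{hom}\bigl(\mathbf F[V_i]_{\lambda},\mathbf A_{c_i}\bigr),
\]
and the inner sum no longer collapses to a binomial coefficient because the factors depend on the $c_i$'s.

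The missing idea, and the real content of the paper's proof, is therefore to show that for polynomials $P_1,\dots,P_m$ the quantity
\[
\sum_{1\le c_1<\cdots<c_m\le n}\ \prod_{i=1}^m P_i(c_i)
\]
is itself a polynomial in $n$. The paper does this by the telescoping observation that each inner sum $\sum_{c_m=c_{m-1}+1}^{n}P_m(c_m)$ equals $Q_m(n)-Q_m(c_{m-1})$ for some polynomial $Q_m$, and then recursing. Your proof would become correct (for the intended construction) once you insert this nested-sum lemma in place of the $\binom{n}{m}$ shortcut; as it stands, that step is a genuine gap.
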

\begin{proof}
Let $\mathbf F$ be a connected $\lambda^+$-structure.
Say that 
  $\mathbf F$ is nice if $F$ can be partitioned as $F=\bigsqcup_{i=1}^k F_i$, with the property that for every $(x,y)\in F_i\times F_j$ it holds that $\mathbf F\models S(x,y)$ if and only if $i<j$ and  $\mathbf F\models (\forall x) U(x)$. Then the number ${\rm inj}(\mathbf F,\mathbf T \langle\mathbf A\rangle_n)$ of injective homomorphisms $f:\mathbf F\rightarrow \mathbf T \langle\mathbf A\rangle_n$ is given by
  $$
  {\rm inj}(\mathbf F,\mathbf T \langle\mathbf A\rangle_n)=\begin{cases}
  0,&\text{if }\mathbf F\text{ is not nice,}\\
  \displaystyle\sum_{1\leq i_1<\dots<i_k\leq n}\ \prod_{j=1}^k {\rm inj}(\mathbf F_j,\mathbf A_{i_j}),&\text{otherwise,}
  \end{cases}
  $$
  where $\mathbf F_j$ is the substructure of $\mathbf F$ induced on $F_j$.
As $(\mathbf A_n)_{n\in\bbbn}$ is strongly polynomial, there are polynomials $P_1,\dots,P_k$ such that 
$${\rm inj}(\mathbf F_j,\mathbf A_{n})=P_j(n).$$
For every $n\in\bbbn$, we have
$$\sum_{1\leq i_1<\dots<i_k\leq n}\ \prod_{j=1}^k P_j(i_j)=
\sum_{i_1=1}^n P_1(i_1) \left(\sum_{i_2=i_1+1}^n P_2(i_2) \left(\dots \sum_{i_k=i_{k-1}+1}^n P_k(i_k)\dots\right)\right).$$
But for each $k$ there exists a polynomial $Q_k$ such that 
 $\sum_{i_k=i_{k-1}+1}^n P_k(i_k)=Q_k(n)-Q_k(i_{k-1})$, in which $i_0=0$. 
 By induction on $k$, it follows that there exists a polynomial $Q_{\mathbf F}$ such 
 that for every $n\in\bbbn$ we have
  $$
 Q_{\mathbf F}(n)=\begin{cases}
  0,&\text{if }\mathbf F\text{ is not nice,}\\
  \displaystyle\sum_{1\leq i_1<\dots<i_k\leq n}\ \prod_{j=1}^k P_j(i_j),&\text{otherwise.}
  \end{cases}
  $$
It follows that the sequence $(\mathbf T \langle\mathbf A\rangle_n)_{n\in\bbbn}$ is strongly polynomial.
\end{proof}
\begin{definition}\label{def:gen_basic}
A {\em generalized basic structure with parameter }
$$(((\mathbf A^1_n)_{n\in\bbbn},\dots,(\mathbf A^k_n)_{n\in\bbbn}),(\mathbf B^1,\dots,\mathbf B^\ell))$$
 is any structure of the form
$$
\mathbf C=\bigoplus_{i=1}^\ell \mathbf B^i\oplus\bigoplus_{j=1}^k \mathbf{T}\langle\mathbf{A}^j\rangle_{N_j},$$
with $N_1,\dots,N_k\in\bbbn$. These integers will be also be denoted by 
$N_1(\mathbf C),\dots,N_k(\mathbf C)$.

A {\em generalized basic sequence} 
 is a sequence $(\mathbf C_n)_{n\in\bbbn}$ of generalized basic structures 
$\mathbf C_n$ with the same parameter $(((\mathbf A^1_n)_{n\in\bbbn},\dots,(\mathbf A^k_n)_{n\in\bbbn}),(\mathbf B^1,\dots,\mathbf B^\ell))$,
such that 
there are non-constant polynomials $Q_i$, $1\leq i\leq k$ with $Q_i(n)=N_i(\mathbf{C}_n)$
(for every $1\leq i\leq k$ and $n\in\bbbn$).
\end{definition}

\begin{theorem}
For every generalized basic sequence $(\mathbf C_n)_{n\in\bbbn}$ and every
QF-interpretation scheme $I$, the sequence $(I(\mathbf C_n))_{n\in\bbbn}$
is strongly polynomial.
\end{theorem}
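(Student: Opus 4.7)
The plan is to reduce the statement to a direct application of Corollary~\ref{cor:IntPreserveSP}. Since that corollary ensures QF-interpretations preserve strongly polynomial sequences, it suffices to show that every generalized basic sequence $(\mathbf C_n)_{n\in\bbbn}$ is itself strongly polynomial. The remaining work is then to assemble this from tools already in hand: the preceding lemma on $\mathbf T\langle \mathbf A\rangle_n$, together with Lemmas~\ref{lem:extractP} and~\ref{lem:union}.

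First I would invoke the preceding lemma on each of the $k$ sequences $(\mathbf A^j_n)_{n\in\bbbn}$ (which belongs to the setting and is assumed strongly polynomial, for otherwise the statement is vacuous): this produces strongly polynomial sequences $(\mathbf T\langle \mathbf A^j\rangle_n)_{n\in\bbbn}$ of $(\lambda^j)^+$-structures, for $1\leq j\leq k$. Next, since each $Q_j$ is an integer-valued polynomial, Lemma~\ref{lem:extractP} gives that the reindexed sequences $(\mathbf T\langle \mathbf A^j\rangle_{Q_j(n)})_{n\in\bbbn}$ are also strongly polynomial. Meanwhile, the constant sequences $(\mathbf B^i)_{n\in\bbbn}$ are trivially strongly polynomial: for each quantifier-free formula $\phi$, the value $|\phi(\mathbf B^i)|$ is a fixed integer, i.e.\ a degree-zero polynomial in $n$.

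Finally, since all of these $k+\ell$ sequences live in mutually disjoint signatures (by construction of the generalized basic structure), Lemma~\ref{lem:union} assembles them via strong sum into a strongly polynomial sequence, which is precisely $(\mathbf C_n)_{n\in\bbbn}$. One more application of Corollary~\ref{cor:IntPreserveSP} then yields strong polynomiality of $(I(\mathbf C_n))_{n\in\bbbn}$ for any QF-interpretation scheme $I$, as required.

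There is no serious obstacle left here: the delicate step was already carried out in the preceding lemma, where the polynomiality of $\mathrm{inj}(\mathbf F,\mathbf T\langle \mathbf A\rangle_n)$ was extracted by iterated polynomial summation. Beyond that, the only point to keep in mind is the bookkeeping of signatures and markings that makes the strong sum well-defined, and the implicit assumption that each parameter sequence $(\mathbf A^j_n)_{n\in\bbbn}$ is itself strongly polynomial, without which the construction cannot be propagated through the preceding lemma.
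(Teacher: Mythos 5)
Your proposal is correct and follows essentially the same route as the paper's own proof, which simply invokes the lemma that $(\mathbf T\langle\mathbf A\rangle_n)_{n\in\bbbn}$ is strongly polynomial together with Lemmas~\ref{lem:extractP} and~\ref{lem:union} (and, implicitly, Corollary~\ref{cor:IntPreserveSP}). Your side remark that the statement tacitly assumes the parameter sequences $(\mathbf A^j_n)_{n\in\bbbn}$ are themselves strongly polynomial is also a fair observation about the paper's phrasing, since this hypothesis is needed to apply the preceding lemma but is not explicitly built into Definition~\ref{def:gen_basic}.
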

\begin{proof}
As the sequence $(\mathbf T \langle\mathbf A\rangle_n)_{n\in\bbbn}$ is strongly polynomial, 
this theorem is a direct consequence of Lemmas~\ref{lem:extractP} and~\ref{lem:union}.
\end{proof}

\section{Discussion}\label{sec:Discussion}
The notion of interpretation scheme introduced in this paper could be made even more general by introducing a formula $\varpi$ (with $2p$ free variables) defining an equivalence relation on the set of $p$-tuples compatible with the formulas $\rho_i$, in the sense that
$$\bigwedge_{j=1}^{r_i}\varpi(\mathbf x_j,\mathbf y_j)\quad\vdash\quad
\rho_i(\mathbf x_1,\dots,\mathbf x_{r_i})\leftrightarrow
\rho_i(\mathbf y_1,\dots,\mathbf y_{r_i})$$
 (see for instance~\cite{L09}).
For a general interpretation scheme $I$ of $\lambda$-structures in $\kappa$-structures and a $\kappa$-structure $\mathbf A$, the vertex set of $I(\mathbf A)$ is then the set of $\varpi$-equivalence classes $[\mathbf x]$ of $p$-tuples $\mathbf x$ such that $\mathbf A$ satisfies $\rho_0(\mathbf x)$.

In such a context, we can prove the following generalization of Corollary~\ref{cor:IntPreserveSP}.

\begin{theorem}
\label{thm:quot}
Let $(\mathbf A_n)_{n\in\bbbn}$ be a strongly polynomial sequence of $\kappa$-structures and let
$I=(p,\varpi,\rho_0,\dots,\rho_k)$ 
 be a general interpretation scheme of $\lambda$-structures in $\kappa$-structures such that
 all the formulas $\varpi,\rho_0,\dots,\rho_k$ are quantifier-free.
 
 Assume that all the $\varpi$-equivalence classes have polynomial quantifier-free definable cardinalities. More precisely, we assume that there exists an integer $N$,
 polynomials $Q_1,\dots,Q_N$ (such that $Q_i(n)\in\mathbb{N}$ for every $1\leq i\leq N$ and every $n\in\bbbn$), 
and quantifier-free formulas $\eta_1,\dots,\eta_N$ (with $p$ free variables) such that $\bigvee_{i=1}^N \eta_i=1$ and for every $n\in\bbbn$ and every $(v_1,\dots,v_p)\in A_n^p$ the $\varpi$-equivalence class $[(v_1,\dots,v_p)]$
of $(v_1,\dots,v_p)$ in $A_n^p$ has cardinality exactly $Q_i(n)$ if $\mathbf A_n\models\eta_i(v_1,\dots,v_p)$.

Then 
$(I(\mathbf A_n))_{n\in\bbbn}$ is a strongly polynomial sequence of $\lambda$-structures.
\end{theorem}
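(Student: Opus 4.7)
The plan is to reduce, via Theorem~\ref{thm:equiv} and Remark~\ref{rem:rat}, to showing that for each quantifier-free formula $\phi$ with $r$ free variables the count $|\phi(I(\mathbf A_n))|$ is a rational function of $n$ that takes non-negative integer values. First I would extend the pullback $\tilde I$ of Lemma~\ref{lem:dual} to the general setting by declaring $M_I(x_i = x_j) := \varpi(M_I(x_i), M_I(x_j))$ and leaving the other clauses unchanged. Since $\varpi$ and all the $\rho_i$ are quantifier-free, $\tilde I(\phi) \in \mathrm{QF}(\kappa)$, and the compatibility hypothesis on $\varpi$ ensures that satisfaction of $\tilde I(\phi)$ at $(\mathbf x_1, \ldots, \mathbf x_r)$ depends only on the classes $[\mathbf x_j]$, giving the natural generalization
$$I(\mathbf A_n) \models \phi([\mathbf x_1], \ldots, [\mathbf x_r]) \iff \mathbf A_n \models \tilde I(\phi)(\mathbf x_1, \ldots, \mathbf x_r).$$

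I would then refine the given cover into a disjoint one by replacing $\eta_i$ with $\eta_i' := \eta_i \wedge \bigwedge_{j<i} \neg \eta_j$, which remains quantifier-free; these now partition $A_n^p$, and whenever $\mathbf x$ satisfies $\eta_i'$ one has $|[\mathbf x]| = Q_i(n)$. Set $R_n := \tilde I(\phi)(\mathbf A_n)$ and, for each $(i_1, \ldots, i_r) \in [N]^r$,
$$R_n^{(i_1, \ldots, i_r)} := R_n \cap \bigl(\eta_{i_1}'(\mathbf A_n) \times \cdots \times \eta_{i_r}'(\mathbf A_n)\bigr).$$
Each $R_n^{(i_1, \ldots, i_r)}$ is the satisfaction set of a quantifier-free formula on $\mathbf A_n$, so strong polynomiality of $(\mathbf A_n)_{n\in\bbbn}$ yields a polynomial $P_{(i_1, \ldots, i_r)}$ with $|R_n^{(i_1, \ldots, i_r)}| = P_{(i_1, \ldots, i_r)}(n)$.

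The heart of the argument is an averaging identity: each equivalence-class tuple $([\mathbf x_1], \ldots, [\mathbf x_r])$ in $\phi(I(\mathbf A_n))$ contributes exactly $\prod_{j=1}^r |[\mathbf x_j]|$ representative tuples to $R_n$, and for each coordinate $\mathbf x_j$ the reciprocal $1/|[\mathbf x_j]|$ equals the sum over $i$ of (indicator that $\eta_i'(\mathbf x_j)$ holds) divided by $Q_i(n)$. Expanding the product of these reciprocals over $j$ and interchanging sums yields
$$|\phi(I(\mathbf A_n))| = \sum_{(\mathbf x_1, \ldots, \mathbf x_r) \in R_n} \prod_{j=1}^r \frac{1}{|[\mathbf x_j]|} = \sum_{(i_1, \ldots, i_r) \in [N]^r} \frac{P_{(i_1, \ldots, i_r)}(n)}{\prod_{j=1}^r Q_{i_j}(n)}.$$
The right-hand side is a rational function of $n$, while the left-hand side is a non-negative integer for every $n \in \bbbn$; Remark~\ref{rem:rat} then upgrades it to a polynomial, and the conclusion follows by Theorem~\ref{thm:equiv}. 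The main subtle point I expect is this averaging identity: the $\eta_i'$ are not themselves constant on $\varpi$-classes, so one must use the hypothesis to argue that for every representative $\mathbf x$, whichever $\eta_i'$ it satisfies, the corresponding $Q_i(n)$ records the correct class size $|[\mathbf x]|$. (A minor technical nuisance is that $Q_{i_j}(n)$ could formally vanish at isolated $n$; but at any such $n$ the numerator $P_{(i_1,\ldots,i_r)}(n)$ vanishes too, because an empty $\eta_{i_j}'$ forces $R_n^{(i_1,\ldots,i_r)}$ to be empty, so the rational function $\sum P_{(i_1,\ldots,i_r)}/\prod Q_{i_j}$ is regular for all but finitely many $n$ and Remark~\ref{rem:rat} still applies.)
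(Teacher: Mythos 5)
Your proof follows essentially the same route as the paper's: pull $\phi$ back through the interpretation scheme, stratify the preimage tuples by which $\eta$ each coordinate block satisfies, divide by the class sizes $Q_{i_j}(n)$, and invoke Remark~\ref{rem:rat} to promote the resulting rational function to a polynomial. You are a bit more careful than the paper's terse proof on two points that it glosses over---translating $\lambda$-equality to $\varpi$ rather than componentwise equality, and refining the cover $\{\eta_i\}$ to a disjoint family $\{\eta_i'\}$ to avoid overcounting when the $\eta_i$ overlap---but the underlying argument and its key averaging identity are the same.
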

\begin{proof}
Let $I'=(p,\rho_0,\dots,\rho_k)$ and let $\phi\in{\rm QF}_q(\lambda)$ be a quantifier-free formula.
According to Lemma~\ref{lem:dual}, for every $\kappa$-structure $\mathbf A$ we have $\phi(I'(\mathbf A))=\tilde{I'}(\phi)(\mathbf A)$.
For $f:[q]\rightarrow [N]$, define the quantifier free formula $\psi_f\in{\rm QF}_q(\lambda)$ as follows:
$$\psi_f(\mathbf x_1,\dots,\mathbf x_p):\quad
\tilde{I'}(\phi)(\mathbf x_1,\dots,\mathbf x_p)\wedge
\bigwedge_{i=1}^N\eta_{f(i)}(\mathbf x_i).$$
By hypothesis it is immediate that for every $n\in\bbbn$ we have
$$
|\phi(I(\mathbf A_n))|=\sum_{f:[q]\rightarrow [N]}
\frac{|\psi_f(\mathbf A_n)|}{\prod_{i=1}^q Q_i(n)}.
$$
Hence $|\phi(I(\mathbf A_n))|$ is a rational function of $n$, thus a polynomial function of $n$ (according to Remark~\ref{rem:rat}).
\end{proof}
General interpretation schemes allow the definition of new constructions preserving the property of a sequence being strongly polynomial. For instance:
\begin{itemize}
\item $G\mapsto \dot{G}$, which maps a graph to its $1$-subdivision (that is, the graph obtained from $G$ by replacing each edge by a path of length $2$);
\item $G\mapsto L(G)$, which maps a simple graph to its line graph.
\end{itemize}

However, if we restrict ourselves to the class of basic structures, introducing an equivalence relation with bounded class cardinalities does not in fact lead to the definition of any new transformations.
By a fine study of quantifier-free definable equivalence relations, one can prove the following result.



\begin{theorem}
\label{thm:simple}
Let $k,\ell$ be integers and let $I=(p,\varpi,\rho_0,\dots,\rho_k)$ be a general  interpretation scheme of $\lambda$-structures in $\beta_{k,\ell}$-structures defined by quantifier-free formulas such that there exists an integer $N$ with the property that, for every $\mathbf B\in\mathcal{B}_{k,\ell}$ and every $(v_1,\dots,v_p)\in B^p$, the $\varpi$-equivalence class of  
$(v_1,\dots,v_p)$ has cardinality at most $N$.

Then there exists a (restricted) QF-interpretation scheme
$\hat I=(\hat\rho_0,\rho_1,\dots,\rho_k)$ of $\lambda$-structures in $\beta_{k,\ell}$-structures 
such that for every
$B\in\mathcal B_{k,\ell}$, the $\lambda$-structures 
$I(\mathbf B)$ and $\hat{I}(\mathbf B)$ are isomorphic.
\end{theorem}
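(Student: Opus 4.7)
The plan is to find a quantifier-free choice of canonical representative for each $\varpi$-class, so that $\hat\rho_0$ can be built as the conjunction of $\rho_0$ with this choice. The heart of the argument is a rigidity lemma forced by the bounded-cardinality hypothesis: for every $\mathbf B\in\mathcal B_{k,\ell}$, every $\mathbf x\in B^p$ and every $\mathbf y\in[\mathbf x]_\varpi$, each coordinate $y_j$ either is one of the $\ell$ $\mathbf E$-vertices (each identifiable by its unique mark $U^E_i$) or equals some $x_i$. To prove this I would assume for contradiction that some $y_j$ lies in a tournament $\mathbf T_{N_a}$ yet differs from every $x_i$ and every $\mathbf E$-vertex; after grouping it with all other $y_{j'}$ sharing its value, I may assume it is also distinct from every remaining coordinate. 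Since $\varpi$ is quantifier-free, the atomic facts involving $y_j$ are Boolean combinations of equalities and of $S_a$ between $y_j$ and the other $2p-1$ coordinates, and therefore depend only on the relative position of $y_j$ within a fixed interval of $\mathbf T_{N_a}$ determined by the other coordinates. Replacing $y_j$ by any other element of that interval preserves all atomic facts, hence preserves $\varpi(\mathbf x,\mathbf y)$, and choosing $\mathbf B$ with $N_a>N+2p$ produces more than $N$ distinct equivalent tuples, contradicting the class-size bound. The case where several free coordinates are forced equal is handled identically, with the whole group sliding together along the interval.

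From the rigidity lemma it follows that for each of the finitely many QF-types $\pi$ of $p$-tuples in $\mathcal B_{k,\ell}$ there is a finite set $\Sigma_\pi$ of substitution maps $\sigma\colon [p]\to[p]\sqcup\{e_1,\dots,e_\ell\}$ such that for every $\mathbf B\in\mathcal B_{k,\ell}$ and every $\mathbf x\in B^p$ of type $\pi$,
$$[\mathbf x]_\varpi \;=\; \{\sigma(\mathbf x):\sigma\in\Sigma_\pi\},$$
where $\sigma(\mathbf x)_j$ is $x_{\sigma(j)}$ or the named $\mathbf E$-vertex $e_{\sigma(j)}$ according to the value of $\sigma$. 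Both the predicate ``$\mathbf x$ has QF-type $\pi$'' and the value of $\sigma(\mathbf x)$ are QF-definable in $\beta_{k,\ell}$, and $\Sigma_\pi$ is obtained by testing $\varpi$ on a single representative tuple of type $\pi$.

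Basic structures carry a natural QF-definable total pre-order on $p$-tuples: coordinate by coordinate, compare components according to a fixed enumeration (using the unary marks $U^T_j,U^E_i$) and, within a single tournament, use $S_j$. Call this $\leq_{\mathrm{lex}}$ and define
$$\hat\rho_0(\mathbf x) \;\equiv\; \rho_0(\mathbf x)\,\wedge\,\bigwedge_\pi\Bigl(\mathrm{type}(\mathbf x)=\pi \;\rightarrow\; \bigwedge_{\sigma\in\Sigma_\pi}\mathbf x\leq_{\mathrm{lex}}\sigma(\mathbf x)\Bigr),$$
a finite conjunction of quantifier-free formulas. Then $\hat\rho_0$ selects the $\leq_{\mathrm{lex}}$-minimum of each $\varpi$-class, giving exactly one representative per class. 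Setting $\hat I=(p,\hat\rho_0,\rho_1,\dots,\rho_k)$, the canonical-representative map yields an isomorphism $\hat I(\mathbf B)\cong I(\mathbf B)$, since the $\rho_i$ ($i\geq 1$) are $\varpi$-invariant by hypothesis. The only substantial step is the rigidity lemma; everything after is bookkeeping over the finitely many QF-types.
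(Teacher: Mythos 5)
Your proof is correct and takes essentially the same route as the paper's (sketched) proof: both rest on the observation that every tuple in a $\varpi$-class is a rearrangement of a fixed finite set of coordinates (your rigidity lemma is the explicit form of what the paper phrases in terms of packed vectors and $(k,\ell)$-patterns, and your sliding argument is the concrete version of the paper's remark that ``for an equivalence relation $\varpi$ of a different form we can construct $\mathbf A$ with at least one arbitrarily large equivalence class''), after which both select a unique canonical representative per class by a quantifier-free-definable minimum (your $\leq_{\mathrm{lex}}$ over tuples, the paper's linear order on patterns). The only cosmetic divergence is that you compare concrete tuple values coordinatewise while the paper compares the finitely many pattern labels; these amount to the same choice once the rigidity step is in place.
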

\begin{proof}[Proof sketch]
Consider the graphical interpretation scheme $I'=(p,\rho_0,\varpi)$. The condition that $\varpi$-equivalence classes have cardinality at most $N$ translates to the fact that  for every $\mathbf B\in\mathcal{B}_{k,\ell}$ the maximum degree of $I'(\mathbf B)$ is at most $N\!-\!1$. 

A {\em $(k,\ell)$-pattern of length $p$} is a surjective mapping 
$$F:[p]\rightarrow I\cup \bigcup_{j\in J} \{j\}\times [n_j],$$
where $I\subseteq [\ell]$, $[J]\subseteq [k]$, and $1\leq n_j\leq p$.
The {\em profile} of $F$ is the $k$-tuple $(n_1,\dots,n_k)$.
For $F(i)\notin I$, we denote by $F(i)_1$ and $F(i)_2$ the two 
coordinates of $F(i)$ (so that $F(i)=(F(i)_1,F(i)_2)$.

To a $(k,\ell)$-pattern $F$ of length $p$ we associate the quantifier-free formula

\begin{equation}
\begin{split}
\tau_F:\ &\bigwedge_{i: F(i)\in I}U_{F(i)}^E(x_i)\,\wedge\,
\bigwedge_{i: F(i)\notin I}U^T_{F(i)_1}(x_i)
\,\wedge\,\bigwedge_{i,j: F(i)=F(j)}(x_i=x_j)\\
&\,\wedge\,\bigwedge_{\substack{i,j: F(i),F(j)\notin I\\\text{and }F(i)_1=F(j)_i\\\text{and } F(i)_2<F(j)_2}}S_{F(i)_1}(x_i,x_j).
\end{split}
\end{equation}

Then it is easily checked that for  $k,\ell\in\bbbn$ and every $\mathbf A\in\mathcal{B}_{k,\ell}$ there exists a unique $(k,\ell)$-pattern $F_{\mathbf{A}}$ such that $\mathbf A\models \tau_{F_{\mathbf{A}}}(\mathbf{A})$.
Also, for every quantifier-free formula $\phi\in{\rm QF}_p$ there exists a finite family $\mathcal F$ of $(k,\ell)$-patterns such that for every $\mathbf A\in\mathcal{B}_{k,\ell}$ and every $v_1,\dots,v_p\in A$ it holds that
$$\mathbf A\models\phi(v_1,\dots,v_p)\quad\iff\quad
\mathbf A\models\bigvee_{F\in\mathcal{F}}\tau_F(v_1,\dots,v_p).$$

To every $p$-tuple $\mathbf v=(v_1,\dots,v_p)\in A^p$, with $\mathbf{A}\in\mathcal{B}_{k,\ell}$, we associate
the vector $\overline{\mathbf v}$ of distinct coordinates of $\mathbf{v}$ belonging to tournaments, taken in order. A vector $\mathbf{u}$ is {\em packed} if $\mathbf{u}=\overline{\mathbf{u}}$. The profile of a vector $\mathbf{u}$ is the profile of $F_{\mathbf{u}}$. Note that $\mathbf u$ and $\overline{\mathbf u}$ have the same profile.
 Note also that 
$\zeta:{\mathbf v}\mapsto (\overline{\mathbf v}, F_{\mathbf v})$ is a bijection between $A^p$ and pairs $(\mathbf u,F)$ such that $\mathbf u$ is packed and has the same profile as $F$. For given $F$, we denote by $\Omega(\mathbf A, F)$ the packed vectors with coordinates in $A$ having the same profile as $F$.
For $k,\ell\in\bbbn$ and  $\phi\in{\rm QF}_p(\beta_{k,\ell})$, there exists a set $\mathcal{F}$ of $(k,\ell)$-patterns, such that 
$$\phi(\mathbf A)=\bigcup_{F\in\mathcal{F}} 
\{\zeta^{-1}(\mathbf u, F):\ \mathbf u\in\Omega(\mathbf A, F)\},$$
where the union is a disjoint union.

From the hypothesis that $\varpi$ is an equivalence relation with bounded classes, we deduce that there exists a collection $\{\mathcal F_1,\dots,\mathcal F_t\}$ of disjoint sets of $(k,\ell)$-patterns with the same profile, such that each connected component of $I'(\mathbf B)$ is a clique with vertex set 
$$K_{t,\mathbf x}=\{\zeta^{-1}(\mathbf x, F): F\in\mathcal F_t\},$$
where $\mathbf x$ is any compressed vector with the same profile as $F\in\mathcal F_t$. (This is shown by proving that for an equivalence relation $\varpi$ of a different form we can construct $\mathbf A$ with at least one arbitrarily large equivalence class).

Consider an arbitrary linear order on $(k,\ell)$-patterns.
Let $\hat\rho_0\in{\rm QF}_p$ be the quantifier-free formula
$$\hat\rho_0(x_1,\dots,x_p):\quad\rho_0(x_1,\dots,x_p)\wedge\bigvee_{i=1}^t\tau_{\min \mathcal F_i}(x_1,\dots,x_p).$$
Then $\hat\rho_0$ selects exactly one vertex in each $\varpi$-equivalence class, and the statement of the lemma now follows.
\end{proof}
We do not include the full lengthy technical proof of Theorem~\ref{thm:simple}.
\section{Conclusion and open problems}
A natural problem arising from this paper is to
figure out whether the strongly polynomial sequences we have constructed from basic sequences constitute the general case, as suggested by Theorem~\ref{thm:bounded}.

A strongly polynomial sequence of graphs $(G_n)_{n\in\bbbn}$ is {\em induced monotone} if $G_n$ is an induced subgraph of $G_{n+1}$ for each $n\in\bbbn$.
Equivalently, $(G_n)_{n\in\bbbn}$ is induced monotone if there exists
a countable graph $G$ such that $G_n$ is the subgraph induced by vertices
$1,\dots,f(n)$, where $f$ is a monotone (non-decreasing) function.

\begin{problem} Can all strongly polynomial induced monotone sequences of graphs be obtained by QF-interpretation schemes from generalized basic sequences (as defined in Definition~\ref{def:gen_basic})?
\end{problem} 

There is more to this than meets the eye. By Theorem~\ref{thm:rf}, we know that every strongly polynomial sequence converges to a random-free graphon. In some sense, we are asking here whether, under the stronger assumption that the sequence of graphs has an inductive countable limit, the countable limit itself may be an interpretation of a countable ``basic" structure.

%

%

The Paley graph indexed by $q$ a prime power congruent to $1$ modulo $4$ is defined as the Cayley graph on $\mathbb{F}_q$ with edges joining vertices whose difference is a non-zero square in $\mathbb{F}_q$. Paley graphs form a quasi-random sequence and so as a consequence of Theorem~\ref{thm:random_free} any strongly polynomial sequence of graphs can contain only finitely many distinct Paley graphs.
Nonetheless, a corollary of~\cite[Prop. 8]{dlHJ95} is that when $F$ is a 2-degenerate graph (for example, a series-parallel graph) the number of homomorphic images of $F$ in the Paley graph on $q$ vertices is a polynomial in $q$ (dependent only on $F$). 

In~\cite{GGN} it is shown that for the hypercube $Q_n\!=\!{\rm Cayley}(\mathbb{F}_2^n, S_1)$, where $S_1$ is the set of $n$ vectors of Hamming weight $1$, for each $F$ the quantity ${\rm hom}(F,Q_n)$ is a polynomial in $2^n$ and~$n$ (dependent only on $F$).

These examples prompt the following question concerning sequences of Cayley graphs:
\begin{problem}\label{prob:Cayley}
Let $(A_n)$ be a sequence of groups and $(B_n)$ a sequence of subsets, $B_n\subseteq A_n$, that are closed under inverses.
When is it the case that 
${\rm hom}(F,{\rm Cayley}(A_n,B_n))$ is a fixed bivariate polynomial (dependent on $F$) in $|A_n|$ and $|B_n|$ for sufficiently large $n\geq n_0(F)$?
\end{problem}
 If here $|A_n|$ and $|B_n|$ are both polynomial in $n$ then this is to ask whether the sequence $({\rm Cayley}(A_n,B_n)\,)$ is a polynomial sequence of graphs (see Section~\ref{sec:intro}).
\begin{remark} By a variation of the argument given in{\rm ~\cite[Prop.2]{dlHJ95}}, when $A_n=\mathbb{Z}_n$ and $B_n=B\cap [-(n-1), n-1]$ for $B\subseteq\mathbb{Z}$, $B=-B$, the sequence $({\rm Cayley}(A_n,B_n))$ is polynomial  if and only if $B$ is finite or cofinite.
\end{remark}

Is it true for instance that the sequence $({\rm Cayley}(\mathbb{Z}_n,D_n))$ where $D_n=[a_n,b_n]$ is an interval of length $o(n)$ is a polynomial sequence?
 
 In another direction, perhaps one should try to approach by means of interpretations other (multivariate) polynomials, such as the Bollob\'as-Riordan coloured Tutte polynomial \cite{BR99}.
\bibliographystyle{amsplain}

\end{document}